\documentclass{article}

\usepackage[english]{babel}
\usepackage{graphicx}
\usepackage{textcomp}
\usepackage{amssymb}
\usepackage{amsmath}
\usepackage{setspace}
\usepackage{geometry}
\usepackage{bm}
\usepackage{centernot}
\usepackage{hyperref}

\newtheorem{proposition}{Proposition}
\newtheorem{corollary}{Corollary}
\newtheorem{remark}{Remark}
\newtheorem{definition}{Definition}
\newtheorem{theorem}{Theorem}
\newtheorem{lemma}{Lemma}
\newtheorem{example}{Example}
\newenvironment{proof}[1][Proof:]{\begin{trivlist}
\item[\hskip \labelsep {\bfseries #1}]}{\end{trivlist}}

\newcommand{\id}{{\mathbf 1}}

\newcommand{\tr}{\operatorname{tr}}



\usepackage{xcolor}

\begin{document}
\title{Approximations in $L^1$ with convergent Fourier series}

\author{Zhirayr Avetisyan\footnote{Department of Mathematics, University of California, Santa Barbara. z.avetisyan@math.ucsb.edu} \and Martin Grigoryan\footnote{Chair of Higher Mathematics, Physics Faculty, Yerevan State University. gmarting@ysu.am} \and Michael Ruzhansky\footnote{Department of Mathematics,
  Ghent University, Belgium, and
  School of Mathematical Sciences,
    Queen Mary University of London,
     United Kingdom. ruzhansky@gmail.com
}}

\maketitle

\begin{abstract}
For a separable finite diffuse measure space $\mathcal{M}$ and an orthonormal basis $\{\varphi_n\}$ of $L^2(\mathcal{M})$ consisting of bounded functions $\varphi_n\in L^\infty(\mathcal{M})$, we find a measurable subset $E\subset\mathcal{M}$ of arbitrarily small complement $|\mathcal{M}\setminus E|<\epsilon$, such that every measurable function $f\in L^1(\mathcal{M})$ has an approximant $g\in L^1(\mathcal{M})$ with $g=f$ on $E$ and the Fourier series of $g$ converges to $g$, and a few further properties. The subset $E$ is universal in the sense that it does not depend on the function $f$ to be approximated. Further in the paper this result is adapted to the case of $\mathcal{M}=G/H$ being a homogeneous space of an infinite compact second countable Hausdorff group. As a useful illustration the case of $n$-spheres with spherical harmonics is discussed. The construction of the subset $E$ and approximant $g$ is sketched briefly at the end of the paper.
\end{abstract}

\section*{Introduction}

In the present paper we work with finite measure spaces $(\mathcal{M},\Sigma,\mu)$. For efficiency of nomenclature we will write $\mathcal{M}=(\mathcal{M},\Sigma,\mu)$ and $|A|=|A|_\mu=\mu(A)$ for every $A\in\Sigma$, where the $\sigma$-algebra $\Sigma$ and the measure $\mu$ are clear from the context. Consider a separable finite measure space $(\mathcal{M},\Sigma,\mu)$. Separability here simply means that all spaces $L^p(\mathcal{M})$ for $1\le p<\infty$ are separable. Let $\{\varphi_n\}_{n=1}^\infty$ be an orthonormal basis of $L^2(\mathcal{M})$ with $\varphi_n\in L^\infty(\mathcal{M})$ for all $n\in\mathbb{N}$. For a function $f\in L^1(\mathcal{M})$ we denote its Fourier components by
$$
Y_n(x;f)=c_n(f)\varphi_n(x),\quad c_n(f)=(f,\varphi_n)_2=\int_\mathcal{M}f(x)\varphi_n^*(x)d\mu(x),\quad\forall n\in\mathbb{N},
$$
where $\varphi_n^*$ denotes the complex conjugate of $\varphi_n$.
The possibly divergent Fourier series of $f$ will be
$$
\sum_{n=1}^\infty Y_n(x;f).
$$
Note that already for the trigonometric system on the interval there exists an integrable function of which the Fourier series diverges in $L^1$ (\cite{Bar64}, Chapter VIII, \S 22). We will often make use of Fourier polynomials and orthogonal series of the form
\begin{equation}
Q(x)=\sum_nY_n(x)=\sum_nc_n\varphi_n(x),\quad c_n\in\mathbb{C},\label{FourierPolynom}
\end{equation}
without reference to a particular function for which these may be the Fourier components. Denote by
\begin{equation}
\sigma(f)=\left\{n\in\mathbb{N}\,\vline\quad c_n(f)\neq0\right\},\quad f\in L^1(\mathcal{M}),
\end{equation}
the spectrum of a function $f$.

Before stating our main theorem let us recall the notion of diffuseness for a measure space.
\begin{definition} In a measure space $(\mathcal{M},\Sigma,\mu)$, a measurable subset $A\in\Sigma$ is called an atom if
$|A|>0$ and for every $B\in\Sigma$ with $B\subseteq A$ either $|B|=|A|$ or $|B|=0$. The measure space $(\mathcal{M},\Sigma,\mu)$ is called diffuse or non-atomic if it has no atoms.
\end{definition}

The main result of this paper is the following
\begin{theorem}\label{MainTheorem} Let $\mathcal{M}$ be a separable finite diffuse measure space, and let $\{\varphi_n\}_{n=1}^\infty$ be an orthonormal system in $L^2(\mathcal{M})$ consisting of bounded functions $\varphi_n\in L^\infty(\mathcal{M})$, $n\in\mathbb{N}$. For every $\epsilon,\delta>0$ there exists a measurable subset $E\in\Sigma$ with measure $|E|>|\mathcal{M}|-\delta$ and with the following property; for each function $f\in L^1(\mathcal{M})$ with $\|f\|_1>0$ there exists an approximating function $g\in L^1(\mathcal{M})$ that satisfies:

1. $\|f-g\|_1<\epsilon$,

2. $f=g$ on $E$,

3. the Fourier series of $g$ converges in $L^1(\mathcal{M})$,

4. we have
$$
\sup_m\left\|\sum_{n=1}^m Y_n(g)\right\|_1<2\min\left\{\|f\|_1,\|g\|_1\right\}.
$$
\end{theorem}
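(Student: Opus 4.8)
The plan is to read this as a universal correction theorem of Men'shov--Talalyan type: for fixed $\epsilon,\delta>0$ one first builds the set $E$ once and for all, and then shows that \emph{every} $f$ can be matched exactly on $E$ by a function $g$ that is an $L^1$-close, ``nice'' perturbation of $f$, the perturbation $g-f$ being supported on the small set $\mathcal M\setminus E$. The natural form for $g$ is a series $g=\sum_k Q_k$ of Fourier polynomials whose spectra $\sigma(Q_k)$ occupy pairwise disjoint, strictly increasing frequency bands $[N_{k-1},N_k)$. This is precisely what will make clause 3 automatic: for such $g$ the Fourier coefficients of $g$ are just the concatenation of the coefficients of the blocks, so any partial sum $\sum_{n=1}^m Y_n(g)$ equals $\sum_{k<J}Q_k$ plus a single partial block $Q_J$, and the series converges in $L^1$ as soon as $\sum_k\|Q_k\|_1<\infty$.

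The engine of the argument is a \emph{one-step lemma}: given a simple function $h$, a threshold $N$, and tolerances $\eta,\gamma>0$, there is a Fourier polynomial $Q$ with $\min\sigma(Q)\ge N$ and an exceptional set $B$ with $|B|<\gamma$ such that $Q$ approximates $h$ in $L^1$ off $B$, with $\|Q\|_1$ comparable to $\|h\|_1$ and with the sharp partial-sum control $\sup_m\|\sum_{n=1}^m Y_n(Q)\|_1\le(1+\eta)\|Q\|_1$. The only tools needed are exactly the hypotheses: completeness of $\{\varphi_n\}$ together with $\varphi_n\in L^\infty$ and Bessel's inequality controls the high-frequency tails, so that forcing the spectrum beyond $N$ costs nothing in the estimates; and diffuseness of $\mu$ lets me both carve the sacrificial set $B$ of prescribed small measure out of any cell and perform the combinatorial ``spreading'' that simultaneously pushes the spectrum past $N$ and prevents the partial sums from overshooting. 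This high-frequency approximation step is the classical difficult ingredient.

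To make $E$ universal I would fix a refining sequence of finite measurable partitions $\mathcal P_1\prec\mathcal P_2\prec\cdots$ generating $\Sigma$, with cell measures tending to $0$ (available since $\mathcal M$ is separable and diffuse), and pre-allocate exceptional sets of total measure $<\delta$ determined only by $\epsilon,\delta$ and the partitions, never by $f$; setting $E=\mathcal M\setminus\bigcup_k(\text{exceptional sets})$ gives $|E|>|\mathcal M|-\delta$ with $E$ manifestly independent of $f$. For a given $f$ with $\|f\|_1>0$ I then run a greedy scheme: approximate the current residual in $L^1$ by a simple function adapted to a sufficiently fine partition, correct it by the lemma into a block $Q_k$ whose spectrum lies beyond that of $Q_{k-1}$, and confine all mismatches to the pre-allocated exceptional sets. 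Then on $E$ the residuals tend to $0$, so $\sum_k Q_k=f$ on $E$ (clause 2), while globally $g=\sum_k Q_k$ stays $L^1$-close to $f$.

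Finally I would choose the thresholds $N_k\uparrow\infty$ and tolerances $\eta_k,\gamma_k\downarrow0$ fast enough that $\sum_k Q_k$ converges in $L^1$ to $g$ with $\|f-g\|_1<\epsilon$ (clause 1), the Fourier series of $g$ converges (clause 3), and the partial sums stay under the target. The design for clause 4 is that the leading block already carries essentially all the mass, $\|Q_1\|_1\approx\|f\|_1$ with $\sup_m\|\sum_{n=1}^m Y_n(Q_1)\|_1\le(1+\eta_1)\|Q_1\|_1$, while the later corrections are negligible; since a partial sum of $g$ is a full sum of finitely many blocks plus one partial block, telescoping keeps it below $2\min\{\|f\|_1,\|g\|_1\}$, the tolerances being scaled to $\|f\|_1$. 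I expect the main obstacle to be exactly this last point: reconciling the sharp $(1+\eta)$ partial-sum bound in the one-step lemma (rather than a mere $O(\|Q\|_1)$) with the exact constraint $g=f$ on $E$ and with confining every adjustment to a set fixed in advance. The quantitative bookkeeping inside the one-step lemma is where the real work lies.
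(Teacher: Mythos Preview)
Your architecture is broadly the paper's: a one-step ``push the spectrum past $N$'' lemma, iterated into blocks with disjoint frequency bands, summed to a function $g$ whose Fourier series is the concatenation of the blocks. But two load-bearing points are asserted rather than argued, and they are exactly where the paper spends its effort.

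First, the universality of $E$. You say the exceptional sets can be ``pre-allocated \ldots\ determined only by $\epsilon,\delta$ and the partitions, never by $f$''. In any concrete one-step lemma of this kind the exceptional set depends on the target simple function: to force $\min\sigma(Q)\ge N$ you must kill the first $N$ Fourier coefficients, and where you modify the function to do that depends on the values of those coefficients, hence on the function. The paper does \emph{not} pre-allocate. Instead it enumerates all rational Fourier polynomials $\{R_k\}$, runs the one-step lemma on \emph{every} $R_k$ with tolerance $\delta/2^k$ to get sets $\tilde E_k$, and sets $E=\bigcap_k\tilde E_k$; universality then comes from the density of $\{R_k\}$, since any $f$ is first written as $\sum_s R_{k_s}$ and each summand is handled by the already-built data for index $k_s$ (actually for a nearby index $\nu_s$). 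Without this enumeration trick or a genuine argument that the exceptional set can be fixed in advance, your $E$ is not shown to be independent of $f$.

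Second, the content of the one-step lemma. You invoke ``completeness, $\varphi_n\in L^\infty$, Bessel, diffuseness, combinatorial spreading'' but give no mechanism that simultaneously (i) makes the low coefficients $c_1,\dots,c_{N-1}$ small, (ii) keeps the function equal to a prescribed constant on most of a cell, and (iii) controls the partial sums. The paper supplies a concrete mechanism: it first reduces via measure-algebra isomorphism to a cylindric space $[0,1]\times\mathcal N$, then on a box $[a,b]\times\Delta_0$ multiplies the constant $\gamma$ by a rapidly oscillating $1$-periodic step function $I(s_0 t)$ with $\int_0^1 I=0$; F\'ej\'er's lemma makes the first $N$ coefficients as small as desired by taking $s_0$ large, while the set $\{I(s_0 t)=1\}$ has relative measure $1-\delta_*$ and the partial sums are controlled by the crude $L^2$ bound $\|g\|_2\le|\gamma|\sqrt{|\Delta|(1+\delta)/\delta}$. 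Note this is \emph{not} your $(1+\eta)\|Q\|_1$; it blows up like $\delta^{-1/2}$, and the paper absorbs this by refining the partition until each piece satisfies $\gamma_k^2|\Delta_k|\lesssim\delta\|f\|_1^2$. Your sketch would need either this reduction-plus-F\'ej\'er device (or an equivalent) and the same small-piece balancing, or an independent proof of the much stronger $(1+\eta)$ bound you state; as written it has neither.
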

Luzin proved that every almost everywhere finite function $f$ on $[0,1]$ can be modified on a subset of arbitrarily small positive measure so that it becomes continuous. Further results in this direction were obtained by Menshov and others. See \cite{Luz12}, \cite{Men41}, \cite{Hel49}, \cite{Men51}, \cite{Pri69}, \cite{Osk76}, \cite{Kis79}, \cite{KaKo88}, \cite{Gri90}, \cite{Gri90a}, \cite{Gri91}, \cite{GrKr13}, \cite{GGK15}, \cite{GrGa16},  \cite{GrSa16} for earlier results in this direction for classical orthonormal systems. Let us note that if $\mathcal{M}$ is not diffuse (i.e., it has atoms) then Statement 4 of this theorem may not hold with any coefficient on the right hand side. This is illustrated in the next
\begin{example} For every natural $N\in\mathbb{N}$, let $(\mathcal{M},\Sigma,\mu)=(\mathbb{N}_2,2^{\mathbb{N}_2},P)$ be the probability space with orthonormal basis $\{\varphi_1,\varphi_2\}$ of $L^2(\mathcal{M})$, where
$$
\mathbb{N}_2=\{1,2\},\quad P(\{1\})=\frac3{16N^2-1},\quad\varphi_1=\left(2N,\frac12\right).
$$
Take $\delta=1/35$ and $f=(1,0)$. Then $|E|>1-\delta$ forces $E=\mathcal{M}$, and therefore $f=g$ on $E$ implies $f=g$ on $\mathcal{M}$. Now
$$
\left\|Y_1(g)\right\|_1=\left\|Y_1(f)\right\|_1=|c_1(f)|\|\varphi_1\|_1=\frac{3N[(8N+3)^2-25]}{4(16N^2-1)}>N\|f\|_1=\frac{3N}{16N^2-1}.
$$
\end{example}

\noindent Theorem \ref{MainTheorem} is equivalent to the following theorem, which can be obtained by repeatedly applying Theorem \ref{MainTheorem} with fixed $f\in L^1(\mathcal{M})$ and $\epsilon_m=\frac1m$, $\delta_m=\frac1m$, $m=1, 2, \ldots$.
\begin{theorem}\label{MainTheorem**} Let $\mathcal{M}$ be a separable finite diffuse measure space, and let $\{\varphi_n\}_{n=1}^\infty$ be an orthonormal system in $L^2(\mathcal{M})$ consisting of bounded functions $\varphi_n\in L^\infty(\mathcal{M})$, $n\in\mathbb{N}$. There exists an increasing sequence of subsets $\{E_m\}_{m=1}^\infty$, $E_m\subset E_{m+1}\subset\mathcal{M}$, with $\lim\limits_{m\to\infty}|E_m|=|\mathcal{M}|$, such that for every integrable function $f\in L^1(\mathcal{M})$ with $\|f\|_1>0$ there exists a sequence of approximating functions $\{g_m\}_{m=1}^\infty$, $g_m\in L^1(\mathcal{M})$, so that the following statements hold:

1. $g_m\xrightarrow[m\to\infty]{}f$ in $L^1(\mathcal{M})$,

2. $f=g_m$ on $E_m$, $\forall m\in\mathbb{N}$,

3. the Fourier series of $g_m$ converges in $L^1(\mathcal{M})$, $\forall m\in\mathbb{N}$,

4. we have
$$
\sup_{N}\left\|\sum_{n=1}^N Y_n(g_m)\right\|_1<2\min\left\{\|f\|_1,\|g_m\|_1\right\}, \quad\forall m\in\mathbb{N}.
$$
\end{theorem}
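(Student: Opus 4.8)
The plan is to obtain Theorem \ref{MainTheorem**} from Theorem \ref{MainTheorem} by applying the latter along a sequence of shrinking parameters and then repairing the nesting of the exceptional sets. First I would fix the tolerances; rather than the literal choice $\delta_m=1/m$ I would take $\epsilon_m=1/m$ together with a \emph{summable} sequence $\{\delta_m\}$ (for instance $\delta_m=2^{-m}$, or equivalently pass to a rapidly growing subsequence of the $1/m$ construction so that the resulting complements become summable). For each $m$, Theorem \ref{MainTheorem} then produces a measurable set $F_m\in\Sigma$ with $|\mathcal{M}\setminus F_m|<\delta_m$ that is \emph{universal}: it depends only on $(\epsilon_m,\delta_m)$ and on the data $(\mathcal{M},\{\varphi_n\})$, and not on the function to be approximated.

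Next I would define the required increasing family by tail intersections,
$$
E_m:=\bigcap_{k=m}^{\infty}F_k.
$$
This immediately gives $E_m\subseteq E_{m+1}$, since intersecting over a smaller index set can only enlarge the set, and it also gives $E_m\subseteq F_m$. The measures behave correctly, because by De Morgan and subadditivity
$$
|\mathcal{M}\setminus E_m|=\Bigl|\bigcup_{k=m}^{\infty}(\mathcal{M}\setminus F_k)\Bigr|\le\sum_{k=m}^{\infty}\delta_k\xrightarrow[m\to\infty]{}0,
$$
so $\lim_{m\to\infty}|E_m|=|\mathcal{M}|$, as required; this is precisely where summability of $\{\delta_m\}$ enters. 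Since the $E_m$ are assembled from the universal sets $F_k$, they do not depend on $f$.

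Now I would fix an arbitrary $f\in L^1(\mathcal{M})$ with $\|f\|_1>0$ and, for each $m$, let $g_m\in L^1(\mathcal{M})$ be the approximant that Theorem \ref{MainTheorem} assigns to $f$ in the $m$-th application, whose exceptional set is $F_m$. Properties 3 and 4 of Theorem \ref{MainTheorem**} are then immediate, as they are exactly statements 3 and 4 of Theorem \ref{MainTheorem} applied to each individual $g_m$. Property 1 follows from $\|f-g_m\|_1<\epsilon_m=1/m\to0$, and Property 2 follows because Theorem \ref{MainTheorem} gives $f=g_m$ on $F_m$, while $E_m\subseteq F_m$ forces $f=g_m$ on $E_m$.

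The derivation is essentially mechanical, and the only point requiring genuine care — the (modest) main obstacle — is reconciling the two demands placed on the exceptional sets: they must be nested, and their measures must converge to $|\mathcal{M}|$. A bare application of Theorem \ref{MainTheorem} yields sets $F_m$ that are in general neither nested nor possess summable complements under the literal choice $\delta_m=1/m$. The tail-intersection device handles the nesting automatically, but at the cost of summing the complements, which is exactly why one must either take $\{\delta_m\}$ summable from the outset or thin the sequence to a subsequence along which $\sum_m\delta_m<\infty$; either choice keeps $\epsilon_m\to0$ and hence preserves the $L^1$-convergence $g_m\to f$.
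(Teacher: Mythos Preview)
Your proof is correct and follows the same strategy the paper sketches---iterate Theorem~\ref{MainTheorem} along shrinking parameters---but you supply a detail the paper elides: the sets produced by successive applications of Theorem~\ref{MainTheorem} need not be nested, and your tail-intersection $E_m=\bigcap_{k\ge m}F_k$ (with summable $\delta_k$ in place of the paper's literal $\delta_m=1/m$) repairs this cleanly while preserving universality in $f$. The remaining verifications are exactly as you describe.
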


\begin{remark} Not for every orthonormal system $\{\varphi_n\}_{n=1}^\infty$ does an arbitrary integrable function $f\in L^1(\mathcal{M})$ have an orthogonal series $\sum_{n=1}^\infty Y_n$ of the form (\ref{FourierPolynom}) that converges to $f$ in $L^1(\mathcal{M})$, and if that happens then $\sum_{n=1}^\infty Y_n$ is necessarily the Fourier series of $f$, i.e., $Y_n=Y_n(f)$.
\end{remark}
For instance, in case of spherical harmonics this is guaranteed only in $L^2(\mathbb{S}^2)$ \cite{BoCl73}. However, the following weaker statement is a corollary of Theorem \ref{MainTheorem**} and holds true for all integrable functions.
\begin{corollary} Under the assumptions of Theorem \ref{MainTheorem**}, there exists an increasing sequence of subsets $\{E_m\}_{m=1}^\infty$, $E_m\subset E_{m+1}\subset\mathcal{M}$, such that $\lim\limits_{m\to\infty}|E_m|=|\mathcal{M}|$ with the following property. For any fixed integrable function $f\in L^1(\mathcal{M})$ with $\|f\|_1>0$ and for every natural number $m\in\mathbb{N}$ there is an orthogonal series $\sum_{n=1}^\infty Y_n^{(m)}$ of which the restriction $\sum_{n=1}^\infty Y_n^{(m)}|_{E_m}$ to the subset $E_m$ converges to the restriction $f|_{E_m}$ in $L^1(E_m)$. In $L^1(\mathcal{M})$ the series $\sum_{n=1}^\infty Y_n^{(m)}$ converges to a function $g_m\in L^1(\mathcal{M})$. The sequence of these functions $\{g_m\}_{m=1}^\infty$ converges to $f$ in $L^1(\mathcal{M})$.
\end{corollary}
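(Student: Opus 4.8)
The plan is to let Theorem~\ref{MainTheorem**} carry the entire argument and to read the required orthogonal series directly off the approximants it produces. I would fix the increasing sequence $\{E_m\}_{m=1}^\infty$ furnished by Theorem~\ref{MainTheorem**}, and for the given $f$ take the approximating functions $\{g_m\}_{m=1}^\infty$ guaranteed by that theorem. The candidate orthogonal series is then simply the Fourier series of $g_m$: I set $Y_n^{(m)}:=Y_n(g_m)=c_n(g_m)\varphi_n$, which is of the form (\ref{FourierPolynom}). The three assertions of the corollary should then follow from properties 3, 2 and 1 of Theorem~\ref{MainTheorem**}, respectively.

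The first thing to pin down is the global limit. Reading property 3 as asserting that the Fourier series $\sum_{n=1}^\infty Y_n^{(m)}$ of $g_m$ converges to $g_m$ in $L^1(\mathcal{M})$, this immediately supplies the function $g_m$ named in the corollary together with the convergence $\sum_{n=1}^\infty Y_n^{(m)}\to g_m$ in $L^1(\mathcal{M})$. Should one instead read property 3 as bare convergence to some $h_m\in L^1(\mathcal{M})$, the only non-formal step is the identification $h_m=g_m$: by the Remark above, the convergent orthogonal series is the Fourier series of $h_m$, whence $c_n(h_m)=c_n(g_m)$ for every $n$; since $\varphi_n\in L^\infty(\mathcal{M})$ the coefficient functionals $h\mapsto c_n(h)=(h,\varphi_n)_2$ are continuous on $L^1(\mathcal{M})$, and invoking completeness of $\{\varphi_n\}$ to conclude that they separate the points of $L^1(\mathcal{M})$ forces $h_m=g_m$.

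The remaining two assertions are then immediate. Because $\|\cdot\|_{L^1(E_m)}\le\|\cdot\|_{L^1(\mathcal{M})}$, the $L^1(\mathcal{M})$-convergence of the partial sums to $g_m$ restricts to $L^1(E_m)$-convergence of their restrictions to $g_m\big|_{E_m}$; property 2 gives $g_m=f$ on $E_m$, so $\sum_{n=1}^\infty Y_n^{(m)}\big|_{E_m}\to f|_{E_m}$ in $L^1(E_m)$, which is the first assertion, and the convergence $g_m\to f$ in $L^1(\mathcal{M})$ is exactly property 1. I expect the only genuine obstacle to be the identification $h_m=g_m$ under the weaker reading of property 3: it amounts to the claim that the coefficient functionals separate the points of $L^1(\mathcal{M})$, an $L^1$-completeness statement that one must extract from the completeness of $\{\varphi_n\}$ together with $\varphi_n\in L^\infty(\mathcal{M})$, rather than from $L^2$-completeness alone; everything else is a one-line consequence of the corresponding property of Theorem~\ref{MainTheorem**}.
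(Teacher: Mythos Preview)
Your approach is exactly what the paper intends: the corollary is stated without proof as an immediate consequence of Theorem~\ref{MainTheorem**}, and your derivation---take the $E_m$ and $g_m$ from that theorem and set $Y_n^{(m)}=Y_n(g_m)$---is precisely the intended reading. Your concern about the identification $h_m=g_m$ is unnecessary here, since the paper's proof of Theorem~\ref{MainTheorem} (and hence of Theorem~\ref{MainTheorem**}) explicitly establishes that the Fourier series of $g$ converges to $g$ itself in $L^1(\mathcal{M})$, not merely to some limit; with that reading of property~3 everything is, as you say, a one-line consequence.
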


\section*{The general case}

Theorem \ref{MainTheorem} is true for every finite separable diffuse measure space $\mathcal{M}$, but it will be more convenient to reduce the problem to that for a smaller class of measure spaces and then to prove the theorem for that class. First let us show that Theorem \ref{MainTheorem} is invariant under isomorphisms of measure algebras. For that purpose we will reformulate {Theorem \ref{MainTheorem}} in a way that makes no reference to the actual measure space $\mathcal{M}$ but only to its measure algebra $\mathcal{B}(\mathcal{M})$. We note that if we replace the set $E$ produced by \mbox{Theorem \ref{MainTheorem}} by another measurable set $E'\in\Sigma$ such that the symmetric difference is null, $|E\bigtriangleup E'|=0$, then all statements of the theorem remain valid with $E'$ instead of $E$. This brings us to the following equivalent formulation of {Theorem \ref{MainTheorem}}.

\begin{theorem}\label{MainTheorem*} Let $\mathcal{M}$ be a finite separable diffuse measure space, and let $\{\varphi_n\}_{n=1}^\infty$ be an orthonormal system in $L^2(\mathcal{M})$ consisting of bounded functions $\varphi_n\in L^\infty(\mathcal{M})$, $n\in\mathbb{N}$. For every $\epsilon,\delta>0$ there exists a function $\chi_E\in L^\infty(\mathcal{M})$ with $\chi_E^2=\chi_E$ and $\|\chi_E\|_1>|M|-\delta$, with the following properties; for each function $f\in L^1(\mathcal{M})$ with $\|f\|_1>0$ there exists an approximating function $g\in L^1(\mathcal{M})$ that satisfies:

1. $\|f-g\|_1<\epsilon$,

2. $(f-g)\chi_E=0$,

3. the Fourier series of $g$ converges in $L^1(\mathcal{M})$,

4. we have
$$
\sup_m\left\|\sum_{n=1}^m Y_n(g)\right\|_1<2\min\left\{\|f\|_1,\|g\|_1\right\}.
$$
\end{theorem}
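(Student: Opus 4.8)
The plan is to prove Theorem \ref{MainTheorem*} by exhibiting it as a verbatim translation of Theorem \ref{MainTheorem} into the language of the measure algebra, the sole ingredient being the standard correspondence between measurable sets and idempotent elements of $L^\infty(\mathcal{M})$. First I would record the elementary observation that a function $\chi\in L^\infty(\mathcal{M})$ satisfies $\chi^2=\chi$ if and only if it coincides almost everywhere with the indicator function of a measurable set. Indeed, $\chi^2=\chi$ forces $\chi(x)\bigl(\chi(x)-1\bigr)=0$ for almost every $x$, so any fixed representative takes values in $\{0,1\}$ off a null set; putting $E=\{x:\chi(x)=1\}$ gives $\chi=\chi_E$ a.e., and conversely every indicator $\chi_E$ is idempotent. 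Thus $E\mapsto\chi_E$ is a bijection between the measure algebra $\mathcal{B}(\mathcal{M})$ (measurable sets modulo null sets) and the idempotents of $L^\infty(\mathcal{M})$.

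Next I would translate the two clauses in which the set $E$ appears. Because $\|\chi_E\|_1=\int_\mathcal{M}\chi_E\,d\mu=\mu(E)=|E|$, the size requirement $\|\chi_E\|_1>|\mathcal{M}|-\delta$ of Theorem \ref{MainTheorem*} is literally the measure requirement $|E|>|\mathcal{M}|-\delta$ of Theorem \ref{MainTheorem}. Likewise, for $f,g\in L^1(\mathcal{M})$ the product $(f-g)\chi_E$ is the zero element of $L^1(\mathcal{M})$ exactly when $f=g$ almost everywhere on $E$, which is precisely what ``$f=g$ on $E$'' means for equivalence classes of integrable functions. The three remaining clauses — the approximation bound $\|f-g\|_1<\epsilon$, the $L^1$-convergence of the Fourier series of $g$, and the uniform bound on the partial sums $\sum_{n=1}^m Y_n(g)$ — involve no reference to $E$ and are word-for-word identical in the two statements.

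With this dictionary in hand the equivalence of the two statements is immediate. If $E$ is the set furnished by Theorem \ref{MainTheorem}, its indicator $\chi_E$ satisfies $\chi_E^2=\chi_E$ and $\|\chi_E\|_1=|E|>|\mathcal{M}|-\delta$, and for each $f$ the same approximant $g$ fulfils clauses 1--4 of Theorem \ref{MainTheorem*}. Conversely, an idempotent $\chi_E$ as in Theorem \ref{MainTheorem*} is, up to a null set, the indicator of a measurable set $E$ with $|E|=\|\chi_E\|_1>|\mathcal{M}|-\delta$, and the corresponding approximants recover Theorem \ref{MainTheorem}. The only point demanding care is that the set--idempotent correspondence is valid only modulo null sets — an idempotent of $L^\infty(\mathcal{M})$ is an indicator only up to a set of measure zero, and ``$f=g$ on $E$'' must be read almost everywhere — but this is exactly the subtlety already neutralised by the preceding remark that replacing $E$ by any $E'$ with $|E\bigtriangleup E'|=0$ leaves every conclusion intact. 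Hence no genuine obstacle arises, and the two formulations are equivalent; this is what subsequently allows the problem to be transported across isomorphisms of measure algebras and reduced to a single standard model.
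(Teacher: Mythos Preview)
Your proposal is correct and takes essentially the same approach as the paper: the paper does not give a separate proof of Theorem~\ref{MainTheorem*} but simply declares it an equivalent reformulation of Theorem~\ref{MainTheorem}, justified by the remark that replacing $E$ by any $E'$ with $|E\bigtriangleup E'|=0$ leaves all conclusions intact. You have spelled out in full the idempotent/indicator dictionary that the paper invokes only implicitly, so your argument is if anything more detailed than the original, but the underlying idea is identical.
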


In this form the theorem relies only upon spaces $L^p(\mathcal{M})$, $p=1,2,\infty$, which can be constructed purely out of the measure algebra $\mathcal{B}(\mathcal{M})$ with no recourse to the underlying measure space $\mathcal{M}$. In particular, if two measure spaces have isomorphic measure algebras then the statements of \mbox{Theorem \ref{MainTheorem}} on these two spaces are equivalent.
\begin{remark}\label{MeasSpClass} It is known in measure theory that every finite separable diffuse measure space $\mathcal{M}$ satisfies
$$
\mathcal{B}(\mathcal{M})\simeq\mathcal{B}\left([0,a]\right),
$$
where $a>0$ is a positive real number.
\end{remark}
Thus, without loss of generality, we can restrict ourselves to measure spaces $\mathcal{M}=[0,a]$. The next reduction comes from the following observation.
\begin{remark} If Theorem \ref{MainTheorem*} is true for the finite separable measure space $(\mathcal{M},\mu)$ then it is true also for $(\mathcal{M},\lambda\mu)$ for every $\lambda>0$.
\end{remark}
Indeed, for every $p\in[1,\infty]$ the operator $\operatorname{T}_pf\doteq\lambda^{-\frac1p}f$ defines an isometric isomorphism \newline ${\operatorname{T}_p:L^p(\mathcal{M},\mu)\to L^p(\mathcal{M},\lambda\mu)}$. It is now straightforward to check that if the statements of Theorem \ref{MainTheorem*} hold on $(\mathcal{M},\mu)$ with data $\{\varphi_n\}_{n=1}^\infty$, $\epsilon$, $\delta$, $\chi_E$, $f$, $g$, then they hold on $(\mathcal{M},\lambda\mu)$ with data $\{\operatorname{T}_2\varphi_n\}_{n=1}^\infty$, $\epsilon$, $\lambda\delta$, $\operatorname{T}_\infty\chi_E$, $\operatorname{T}_1f$, $\operatorname{T}_1g$.

Thus we established that without loss of generality we are allowed to prove the theorem just for the unit interval $\mathcal{M}=[0,1]$. In fact, in the next sections we will prove Theorem \ref{MainTheorem} on separable cylindric probability spaces, i.e., separable probability spaces of the form $\mathcal{M}=[0,1]\otimes\mathcal{N}$,  where $\mathcal{N}$ is another probability space. The unit interval is trivially cylindric, $[0,1]\simeq[0,1]\otimes\{1\}$, and it may seem an unnecessary effort to prove the theorem for a cylindric space instead of $[0,1]$. But note that the result cited in Remark \ref{MeasSpClass} is very abstract and the produced isomorphisms are in general far from being geometrically natural. Our proof of Theorem \ref{MainTheorem} is constructive, and the construction of the set $E$ highly depends on the cylindric structure. If the space at hand has a natural cylindric structure then this approach gives a geometrically more sensible set $E$ than what we would expect had we identified the cylinder with the unit interval through a wild measure algebra isomorphism.

\section*{The particular case}

In this section we will prove the main theorem for the particular case where $\mathcal{M}=(\mathcal{M},\Sigma,\mu)$ is a separable cylindric probability space
\begin{equation}\label{MCylindric}
\mathcal{M}=[0,1]\otimes\mathcal{N}.
\end{equation}
Here $\mathcal{N}=(\mathcal{N},\Sigma_0,\nu)$ is any separable probability space. We will write $\mathcal{M}\ni x=(t,y)\in[0,1]\times\mathcal{N}$.

\subsection*{The core lemmata}

First let us state a variant of F\'ej\'er's lemma.
\begin{lemma} Let $a,b\in\mathbb{R}$, $a<b$. For every $f\in L^1[a,b]$ and $g\in L^\infty(\mathbb{R})$, $g$ being $(b-a)$-periodic, $$
\lim_{\lambda\to+\infty}\int_a^bf(t)g(\lambda t)dt=\frac1{b-a}\int_a^bf(t)dt\int_a^bg(t)dt.
$$
\end{lemma}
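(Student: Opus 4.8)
The plan is to prove the identity first for a dense class of $f$ and then to extend it to all of $L^1[a,b]$ using the uniform bound furnished by $g\in L^\infty(\mathbb{R})$. Set $T=b-a$ and write $\bar g=\frac1T\int_a^bg(t)\,dt$ for the mean of $g$ over one period; by $(b-a)$-periodicity this mean agrees with the average of $g$ over any interval of length $T$.

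First I would verify the claim for $f=\chi_{[c,d]}$ with $[c,d]\subseteq[a,b]$. Substituting $s=\lambda t$ gives
$$
\int_c^dg(\lambda t)\,dt=\frac1\lambda\int_{\lambda c}^{\lambda d}g(s)\,ds.
$$
The interval $[\lambda c,\lambda d]$ has length $\lambda(d-c)$ and therefore contains $\lfloor\lambda(d-c)/T\rfloor$ complete periods of $g$, each contributing exactly $T\bar g$, together with one incomplete period whose contribution is bounded in absolute value by $\|g\|_\infty T$. Hence $\frac1\lambda\int_{\lambda c}^{\lambda d}g=(d-c)\bar g+O\!\left(\|g\|_\infty T/\lambda\right)$, which tends to $(d-c)\bar g=\frac1{b-a}\int_a^b\chi_{[c,d]}\,\int_a^bg$ as $\lambda\to+\infty$. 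By linearity the lemma then holds for every step function $h$, i.e.\ every finite linear combination of indicators of subintervals of $[a,b]$.

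To pass to a general $f\in L^1[a,b]$, I would fix $\eta>0$ and choose a step function $h$ with $\|f-h\|_1<\eta$, which is possible since step functions are dense in $L^1[a,b]$. The pointwise bound $|g(\lambda t)|\le\|g\|_\infty$ yields, uniformly in $\lambda$,
$$
\left|\int_a^b(f-h)(t)\,g(\lambda t)\,dt\right|\le\|g\|_\infty\,\|f-h\|_1<\|g\|_\infty\,\eta,
$$
while on the right-hand side $\frac1{b-a}\big|\int_a^b(f-h)\big|\,\big|\int_a^bg\big|\le\|g\|_\infty\,\|f-h\|_1<\|g\|_\infty\,\eta$. Combining these two estimates with the already established convergence for $h$ in a standard three-term argument gives the result for $f$.

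The only delicate point is the estimate of the incomplete-period contribution in the substitution step, but it is controlled uniformly by $\|g\|_\infty T/\lambda$ and vanishes in the limit; everything else is routine. The essential ingredient is that the finiteness of $\|g\|_\infty$ makes the approximation error uniform in the parameter $\lambda$, which is precisely what allows the density reduction to go through.
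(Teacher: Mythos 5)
Your proof is correct and follows essentially the classical argument for F\'ej\'er's lemma: establish the identity for indicators of subintervals by the substitution $s=\lambda t$ and period counting, then extend to all of $L^1[a,b]$ by density, using the uniform bound $\|g\|_\infty$ to control the approximation error independently of $\lambda$. The paper gives no proof of its own, deferring to Bary (p.~77) with the remark that the general interval case requires only trivial modifications, and the argument in that reference is the same one you give.
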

This lemma is given in \cite[page 77]{Bar64} with $[a,b]=[-\pi,\pi]$, but the proof for arbitrary $a$ and $b$ follows with only trivial modifications.

We proceed to our first critical lemma. 
\begin{lemma}\label{Lemma1o} Let $\Delta=[a,b]\times\Delta_0\in\Sigma$ with $[a,b]\subset[0,1]$ and $\Delta_0\in\Sigma_0$,\quad $0\neq\gamma\in\mathbb{R}$,\quad $\epsilon,\delta\in(0,1)$ and $N\in\mathbb{N}$ be given. Then there exists a function $g\in L^\infty(\mathcal{M})$, a measurable set $\Sigma\ni E\subset\Delta$ and a Fourier polynomial of the form
$$
Q(x)=\sum_{n=N}^MY_n(x),\quad N\le M\in\mathbb{N},
$$
such that

1. $|E|>|\Delta|(1-\delta)$,

2. $g(x)=\gamma$ for $x\in E$\quad and\quad $g(x)=0$ for $x\notin\Delta$,

3. $|\gamma||\Delta|<\|g\|_1<2|\gamma||\Delta|$,

4. $\|Q-g\|_1<\epsilon$,

5. and
$$
\max_{N\le m\le M}\left\|\sum_{n=N}^mY_n\right\|_1\le\frac{|\gamma|\sqrt{|\Delta|(1+\delta)}}{\sqrt{\delta}}.
$$
\end{lemma}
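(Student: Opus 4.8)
The plan is to produce $g$ explicitly as a high-frequency, mean-zero oscillation in the $[0,1]$-direction of the cylinder, so that (i) its low-frequency Fourier coefficients vanish in the limit by the preceding (Fejér) lemma, and (ii) its $L^2$-norm meets the bound in property~5 on the nose; then $Q$ will simply be the genuine Fourier polynomial of $g$ with indices in $[N,M]$. Concretely, fix $p=\tfrac{\delta}{1+2\delta}\in(0,\delta)$ and let $\rho$ be the $(b-a)$-periodic real function equal to $1$ on a fraction $1-p$ of each period and to the compensating value $-\tfrac{1-p}{p}$ on the remaining fraction $p$, so that $\rho$ has zero mean over a period. For $\lambda\in\mathbb{N}$ put $g(t,y)=\gamma\,\rho(\lambda t)$ on $\Delta=[a,b]\times\Delta_0$ and $g=0$ off $\Delta$, and set $E=\{x\in\Delta:\rho(\lambda t)=1\}$. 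Properties~1 and~2 hold by construction, with $|E|=(1-p)|\Delta|>(1-\delta)|\Delta|$ once $\lambda\in\mathbb{N}$ (so that $[a,b]$ contains exactly $\lambda$ periods of $\rho(\lambda\,\cdot)$). A direct computation of the period-averages of $|\rho|$ and $\rho^2$ gives $\|g\|_1=2|\gamma||\Delta|(1-p)$ and $\|g\|_2^2=\gamma^2|\Delta|\,\tfrac{1-p}{p}$; with the chosen $p$ one has $\tfrac{1-p}{p}=\tfrac{1+\delta}{\delta}$, so $\|g\|_2=\tfrac{|\gamma|\sqrt{|\Delta|(1+\delta)}}{\sqrt{\delta}}$, and $\|g\|_1=2|\gamma||\Delta|\tfrac{1+\delta}{1+2\delta}$ lies strictly between $|\gamma||\Delta|$ and $2|\gamma||\Delta|$, which is property~3.

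For the spectrum I would first kill the low frequencies. By Fubini the function $\tilde\varphi_k(t)=\int_{\Delta_0}\varphi_k^*(t,y)\,d\nu(y)$ lies in $L^1[a,b]$, and $c_k(g)=\gamma\int_a^b\rho(\lambda t)\tilde\varphi_k(t)\,dt$. Since $\rho$ is $(b-a)$-periodic with $\int_a^b\rho=0$, the Fejér lemma gives $c_k(g)\to\tfrac{\gamma}{b-a}\big(\int_a^b\rho\big)\big(\int_a^b\tilde\varphi_k\big)=0$ as $\lambda\to+\infty$. As there are only finitely many indices $k<N$, I choose $\lambda\in\mathbb{N}$ so large that $\|P_{<N}g\|_1\le\sum_{k=1}^{N-1}|c_k(g)|\,\|\varphi_k\|_1\le\sum_{k=1}^{N-1}|c_k(g)|<\epsilon/2$, where $P_{<N}g=\sum_{k=1}^{N-1}c_k(g)\varphi_k$ and $\|\varphi_k\|_1\le\|\varphi_k\|_2=1$ on the probability space. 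Since $g\in L^\infty\subset L^2$, its Fourier series converges to $g$ in $L^2$ and hence in $L^1$; I then fix $M\ge N$ with $\big\|g-\sum_{n=1}^M c_n(g)\varphi_n\big\|_1\le\big\|g-\sum_{n=1}^M c_n(g)\varphi_n\big\|_2<\epsilon/2$ and set $Q=\sum_{n=N}^M c_n(g)\varphi_n$, i.e.\ $Y_n=Y_n(g)$. Then $\|Q-g\|_1\le\big\|g-\sum_{n=1}^M c_n(g)\varphi_n\big\|_1+\|P_{<N}g\|_1<\epsilon$, giving property~4.

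Property~5 then follows from Bessel's inequality and the embedding $\|\cdot\|_1\le\|\cdot\|_2$: for $N\le m\le M$, $\big\|\sum_{n=N}^m Y_n\big\|_1\le\big(\sum_{n=N}^m|c_n(g)|^2\big)^{1/2}\le\|g\|_2=\tfrac{|\gamma|\sqrt{|\Delta|(1+\delta)}}{\sqrt{\delta}}$. The only step demanding care is the combined tuning of the parameters: one must verify that the single choice $p=\tfrac{\delta}{1+2\delta}$ simultaneously secures properties~1,~3 and the sharp $L^2$ bound of property~5, and that $\lambda$ can be taken in $\mathbb{N}$ large enough to drive the finitely many residual coefficients $c_k(g)$, $k<N$, below the $\epsilon$-threshold while keeping $|E|$ exactly equal to $(1-p)|\Delta|$. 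The Fejér limit itself is clean once the $\mathcal{N}$-variable is integrated out by Fubini, so the real bookkeeping is this joint calibration of $p$, $\lambda$ and $M$.
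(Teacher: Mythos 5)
Your proof is correct and follows essentially the same strategy as the paper's: a two-valued, mean-zero, high-frequency oscillation in the $[0,1]$-variable of the cylinder, F\'ej\'er's lemma to suppress the coefficients $c_k(g)$ for $k<N$, $L^2$-convergence of the Fourier series of $g\in L^\infty\subset L^2$ to fix $M$, and Bessel's inequality together with $\|\cdot\|_1\le\|\cdot\|_2$ for Statement 5. The only difference is cosmetic bookkeeping: by taking the profile $(b-a)$-periodic and the dilation an integer, $[a,b]$ contains exactly $\lambda$ full periods and you get exact identities for $|E|$, $\|g\|_1$ and $\|g\|_2$ (with $p=\delta/(1+2\delta)$ calibrated so that $\|g\|_2$ hits the bound in Statement 5 on the nose), whereas the paper's $1$-periodic profile with parameter $\delta_*=\delta/(1+\delta)$ needs a floor-function estimate and a lower bound on $s_0$ to control the partial periods at the ends of $[a,b]$.
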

\begin{proof} Set
\begin{equation}\label{delta*Def}
\delta_*\doteq\frac\delta{1+\delta}\in\left(0,\frac12\right).
\end{equation}
Define the $1$-periodic function $I:\mathbb{R}\to\mathbb{R}$ by setting
\begin{equation}
I(t)=1-\frac1{\delta_*}\chi_{[0,\delta_*)}(t)=\begin{cases}
1\quad\mbox{if}\quad t\in[\delta_*,1),\\
1-\frac1{\delta_*}\quad\mbox{if}\quad t\in[0,\delta_*)
\end{cases},
\end{equation}
for $t\in[0,1)$ and continuing periodically. Then obviously
\begin{equation}
\int_0^1I(t)dt=0.
\end{equation}
By F\'ej\'er's lemma
\begin{eqnarray}
\lim_{s\to+\infty}\int_\Delta I(st)\varphi_n^*(t,y)dtdy=\lim_{s\to+\infty}\int_a^bI(st)\int_{\Delta_0}\varphi_n^*(t,y)dydt\nonumber\\
=\lim_{s\to+\infty}\int_0^1I(st)\left[\chi_{[a,b]}(t)\int_{\Delta_0}\varphi_n^*(t,y)dy\right]dt=\int_0^1I(t)dt\int_\Delta\varphi_n^*(x)dx=0.
\end{eqnarray}
Choose a natural number $s_0\in\mathbb{N}$ sufficiently large so that
\begin{equation}\label{s0def}
s_0>\frac{(1-\delta_*)^2}{\delta_*^2(b-a)}\quad\mbox{and}\quad\left|\int_\Delta I(s_0t)\varphi_n^*(t,y)dtdy\right|<\frac{\epsilon}{2N|\gamma|},\quad1\le n\le N.
\end{equation}
Set
\begin{equation}\label{g_E_def}
g(x)\doteq\gamma I(s_0t)\chi_{\Delta}(x),\quad E\doteq\left\{x\in\Delta\,\vline\quad g(x)=\gamma\right\}.
\end{equation}
Then it can be seen that
\begin{equation}\label{mesEineq}
|E|\ge|\Delta|\frac{\lfloor s_0(b-a)\rfloor(1-\delta_*)}{s_0(b-a)}>|\Delta|(1-\delta_*)\left(1-\frac1{s_0(b-a)}\right)>|\Delta|(1-\delta),
\end{equation}
where the first inequality of formula (\ref{s0def}) and then formula (\ref{delta*Def}) were used in the last step. Clearly, $g\in L^\infty(\mathcal{M})$ and thus we have proven Statements 1 and 2. Next we note using (\ref{delta*Def}) that
\begin{equation}
\int_\Delta|I(s_0t)|dx=\int_Edx+\int_{\Delta\setminus E}\left|1-\frac1{\delta_*}\right|dx=|E|+\frac1{\delta}(|\Delta|-|E|),
\end{equation}
and then by $(1-\delta)|\Delta|<|E|<|\Delta|$ we find that
\begin{equation}
|\Delta|<|E|+\frac1{\delta}(|\Delta|-|E|)<2|\Delta|,
\end{equation}
which together entail
\begin{equation}\label{Int|I|dxIneq}
|\Delta|<\int_\Delta|I(s_0t)|dx<2|\Delta|.
\end{equation}
Similarly,
\begin{equation}
\int_\Delta|I(s_0t)|^2dx=\int_Edx+\int_{\Delta\setminus E}\left(1-\frac1{\delta_*}\right)^2dx=|E|+\frac1{\delta^2}(|\Delta|-|E|)<\left(1+\frac1{\delta}\right)|\Delta|.
\end{equation}
Formulae (\ref{g_E_def}) and (\ref{Int|I|dxIneq}) imply that
\begin{equation}
|\gamma||\Delta|<\|g\|_1=\int_\mathcal{M}|g(x)|dx=|\gamma|\int_\Delta|I(s_0t)|dx<2|\gamma||\Delta|,
\end{equation}
which proves Statement 3. In a similar fashion we obtain
\begin{equation}\label{|g|2Ineq}
\|g\|_2^2=\int_\mathcal{M}|g(x)|^2dx=\gamma^2\int_\Delta|I(s_0t)|^2dx<\left(1+\frac1{\delta}\right)\gamma^2|\Delta|.
\end{equation}
We have $g\in L^\infty(\mathcal{M})\subset L^2(\mathcal{M})$, and therefore the Fourier series $\sum Y_n(g)$ converges to $g$ in $L^2(\mathcal{M})$.
Thus we can choose the natural number $M\in\mathbb{N}$ so large that
\begin{equation}\label{S_g2Ineq}
\left\|\sum_{n=1}^M Y_n(g)-g\right\|_2<\frac\epsilon2.
\end{equation}
Further, from formula (\ref{s0def}) we estimate the magnitude of the first $N$ Fourier coefficients of $g$ as
\begin{equation}\label{|cn|Ineq}
|c_n(g)|=\left|\int_\mathcal{M}g(x)\varphi_n^*(x)dx\right|=|\gamma|\left|\int_\Delta I(s_0t)\varphi_n^*(x)dx\right|<\frac\epsilon{2N},\quad1\le n\le N.
\end{equation}
Finally, set
\begin{equation}
Q(x)\doteq\sum_{n=N}^MY_n(x;g),\quad\forall x\in\mathcal{M}.
\end{equation}
In order to prove Statement 4 we write
\begin{eqnarray}
\left\|Q-g\right\|_1\le\left\|Q-g\right\|_2=\left\|\sum_{n=N}^MY_n(g)-g\right\|_2\le\left\|\sum_{n=1}^MY_n(g)-g\right\|_2+\left\|\sum_{n=1}^{N-1}Y_n(g)\right\|_2\nonumber\\
\le\left\|\sum_{n=1}^MY_n(g)-g\right\|_2+\sum_{n=1}^{N-1}|c_n(g)|\|\varphi_n\|_2<\epsilon,
\end{eqnarray}
where formulae (\ref{S_g2Ineq}) and (\ref{|cn|Ineq}) were used in the last step along with the normalization $\|\varphi_n\|_2=1$. Using the pairwise orthogonality of the Fourier components $Y_n(g)$ and formula (\ref{|g|2Ineq}) we can obtain the coarse estimate
\begin{equation}
\left\|\sum_{n=N}^mY_n(g)\right\|_2^2=\sum_{n=N}^m\left\|Y_n(g)\right\|_2^2\le\sum_{n=1}^\infty\left\|Y_n(g)\right\|_2^2=\|g\|_2^2<\left(1+\frac1{\delta}\right)\gamma^2|\Delta|,
\end{equation}
which immediately yields
\begin{equation}
\left\|\sum_{n=N}^mY_n(g)\right\|_1\le\left\|\sum_{n=N}^mY_n(g)\right\|_2<\frac{|\gamma|\sqrt{|\Delta|(1+\delta)}}{\sqrt{\delta}},\quad m>N,
\end{equation}
thus proving Statement 5. Note that  the inequality $\|.\|_1\le\|.\|_2$ used above hold thanks to the convenient assumption that we are in a probability space. $\Box$
\end{proof}

\begin{lemma}\label{Lemma1} Let $f\in L^1(\mathcal{M})$ with $\|f\|_1>0$, $\epsilon,\delta\in(0,1)$, $N_0\in\mathbb{N}$. Then $\exists E\in\Sigma$, $g\in L^\infty(\mathcal{M})$ and
$$
Q(x)=\sum_{n=N_0}^NY_n(x),\quad N\in\mathbb{N},
$$
such that

1. $|E|>1-\delta$,

2. $x\in E$ implies $g(x)=f(x)$,

3. $\frac13\|f\|_1<\|g\|_1<3\|f\|_1$,

4. $\|g-Q\|_1<\epsilon$,

5. and
$$
\sup_{N_0\le m\le N}\left\|\sum_{n=N_0}^mY_n\right\|_1<3\|f\|_1.
$$
\end{lemma}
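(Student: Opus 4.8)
The plan is to reduce the general $f\in L^1(\mathcal{M})$ to a finite superposition of the elementary blocks handled by Lemma \ref{Lemma1o}, and to glue the resulting Fourier polynomials into a single one by assigning them disjoint, consecutive spectra.

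First I would truncate. Since $f\in L^1(\mathcal{M})$, a large $C$ makes both $|\{|f|>C\}|$ and $\int_{\{|f|>C\}}|f|\,d\mu$ arbitrarily small; put $f_C=f\chi_{\{|f|\le C\}}$. Keeping $E$ inside $\{|f|\le C\}$ forces $f=f_C$ on $E$, which is the device that lets the final $g$ satisfy $g=f$ on $E$ while remaining bounded. Next I would approximate $f_C$ in $L^1$ by a step function $h=\sum_{j=1}^k\gamma_j\chi_{\Delta_j}$ supported on pairwise disjoint rectangles $\Delta_j=[a_j,b_j]\times\Delta_{0,j}$, dropping null coefficients so that each $\gamma_j\neq0$, with $|\gamma_j|\le C$. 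Two quantitative demands are imposed: the error $\|f-f_C\|_1+\|f_C-h\|_1$ should be small relative to both $\epsilon$ and $\|f\|_1$, and, after subdividing the rectangles (which does not change $h$), every block mass $\mu_j=|\gamma_j||\Delta_j|$ should satisfy $\mu_j\le\eta$ for a small $\eta$ fixed later.

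I would then apply Lemma \ref{Lemma1o} to each pair $(\Delta_j,\gamma_j)$ with common tolerance $\delta'=\delta/2$, individual accuracy $\epsilon_j$ with $\sum_j\epsilon_j$ small, and consecutive starting indices $N_1=N_0$, $N_{j+1}=M_j+1$. This yields $g_j\in L^\infty(\mathcal{M})$ supported in $\Delta_j$, sets $E_j\subset\Delta_j$ with $|E_j|>|\Delta_j|(1-\delta')$, and polynomials $Q_j=\sum_{n=N_j}^{M_j}Y_n(g_j)$ whose spectra tile $[N_0,N]$ with $N=M_k$, so that $Q=\sum_jQ_j$ is a Fourier polynomial of the desired shape. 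Setting $\tilde g=\sum_jg_j$, I define the approximant by the correction $g=\tilde g+(f_C-h)$ and take $E=\bigl(\mathcal{M}\setminus(\Omega\setminus\hat E)\bigr)\cap\{|f|\le C\}$ with $\Omega=\bigcup_j\Delta_j$, $\hat E=\bigcup_jE_j$. Because $\tilde g=h=0$ off $\Omega$ and $\tilde g=h=\gamma_j$ on $E_j$, one gets $g=f_C=f$ on $E$, while $g$ is bounded; Statement 1 follows from $|\Omega\setminus\hat E|<\delta'$ and $|\{|f|>C\}|<\delta/4$. Statements 2--4 are then short: $g-Q=\sum_j(g_j-Q_j)+(f_C-h)$ gives Statement 4, and the disjoint supports of the $g_j$ give $\|h\|_1\le\sum_j\|g_j\|_1<2\|h\|_1$, which with $\|h\|_1\approx\|f\|_1$ yields Statement 3.

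I expect Statement 5 to be the real obstacle. For $m$ in the spectral block of some $J$ the partial sum is $\sum_{j<J}Q_j+\sum_{n=N_J}^mY_n(g_J)$; the completed blocks contribute at most $\sum_{j<J}\|g_j\|_1+\sum_j\epsilon_j<2\|h\|_1+(\text{small})$ by disjointness of supports, safely below $2\|f\|_1+(\text{small})$. The trailing incomplete block is controlled only by the $L^2$-type bound of Lemma \ref{Lemma1o}, namely $|\gamma_J|\sqrt{|\Delta_J|(1+\delta')}/\sqrt{\delta'}$, which is not small on its own. The whole point of the fine subdivision enters here: since $|\gamma_J|\sqrt{|\Delta_J|}=\sqrt{|\gamma_J|\,\mu_J}\le\sqrt{C\eta}$, this trailing term is at most $\sqrt{C\eta(1+\delta')/\delta'}$, which shrinks with $\eta$. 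Fixing $\eta$ and $\sum_j\epsilon_j$ small enough that all error terms together stay below $\|f\|_1$ then delivers the bound $3\|f\|_1$ of Statement 5.
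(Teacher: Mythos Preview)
Your argument is essentially the paper's: approximate $f$ by a step function on product rectangles, apply Lemma~\ref{Lemma1o} block by block with consecutive spectral windows, and refine the partition so that the single incomplete block in any partial sum has $|\gamma_J|\sqrt{|\Delta_J|}$ small, which is exactly how the paper handles Statement~5 (their refinement condition is $144\gamma_k^2|\Delta_k|(1+\delta)<\delta\|f\|_1^2$, your $\sqrt{C\eta}$ device is the same thing). The one substantive addition is your preliminary truncation to $f_C$, which is actually needed to make $g\in L^\infty$ as the lemma asserts; the paper sets $g=f-\Lambda+\sum_k g_k$ directly, so its $g$ lies in $L^\infty$ only when $f$ does (harmless in the sequel, since the lemma is only invoked with $f=R_k$ a Fourier polynomial).
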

\begin{proof}For every measurable partition of $\mathcal{N}$
\begin{equation}
\mathcal{N}=\bigsqcup_{i=1}^{\tilde\nu_0}\tilde\Delta_i,\quad\tilde\Delta_i\in\Sigma_0,\quad i\neq j\quad\Rightarrow\quad\left|\tilde\Delta_i\cap\tilde\Delta_j\right|=0,\quad\forall i,j=1,\ldots\tilde\nu_0,\quad\tilde\nu_0\in\mathbb{N},
\end{equation}
and every partition $0=x_0<x_1<\ldots<x_{\bar\nu_0}=1$, $\bar\nu_0\in\mathbb{N}$, of the unit interval, the product partition
\begin{equation}
\Delta_k=[x_{l-1},x_l]\times\tilde\Delta_i,\quad k=\bar\nu_0\cdot i+l-1=1,\ldots,\nu_0\doteq\bar\nu_0\cdot\tilde\nu_0,\quad l=1,\ldots,\bar\nu_0,\quad i=1,\ldots,\tilde\nu_0,
\end{equation}
is a measurable partition of $\mathcal{M}$ with the property that
\begin{equation}\label{PartitionDominance}
\max_{1\le k\le\nu_0}|\Delta_k|\le\max_{1\le l\le\bar\nu_0}|x_l-x_{l-1}|.
\end{equation}
For every product partition $\{\Delta_k\}_{k=1}^{\nu_0}$ as above and every tuple of real numbers $\{\gamma_k\}_{k=1}^{\nu_0}$ consider the step function
\begin{equation}\label{StepFuncDef}
\Lambda(x)=\sum_{k=1}^{\nu_0}\gamma_k\chi_{\Delta_k}(x),\quad\forall x\in\mathcal{M}.
\end{equation}
By the assumption of separability of $\mathcal{M}$ we know that step functions of the form (\ref{StepFuncDef}) subordinate to product partitions are dense in all spaces $L^p(\mathcal{M})$ for $1\le p<\infty$. Choose a product partition and a subordinate step function such that
\begin{equation}\label{|Lambda-f|}
\|\Lambda-f\|_1<\min\left\{\frac12\epsilon,\frac13\|f\|_1\right\}.
\end{equation}
Note that the numbers $\gamma_k$ are not assumed to be distinct, thus we can refine the given partition without changing $\gamma_k$ and the function $\Lambda(x)$. We use the property (\ref{PartitionDominance}) to refine the product partition $\{\Delta_k\}_{k=1}^{\nu_0}$ until it satisfies
\begin{equation}\label{FinePartIneq}
144\gamma_k^2|\Delta_k|(1+\delta)<\delta\|f\|_1^2,\quad k=1,\ldots,\nu_0.
\end{equation}
Now we apply Lemma \ref{Lemma1o} iteratively with
\begin{equation}
\Delta\leftarrow\Delta_k,\quad\gamma\leftarrow\gamma_k,\quad\epsilon\leftarrow\frac1{2^{\nu_0+2}}\min\left\{\epsilon,\|f\|_1\right\},\quad\delta\leftarrow\delta,\quad N\leftarrow N_{k-1}
\end{equation}
for $k=1,\dots,\nu_0$, obtaining at each $k$ a function $g_k\in L^\infty(\mathcal{M})$, a set $\Sigma\ni E_k\subset\Delta_k$, a number $N_{k-1}\le N_k\in\mathbb{N}$ and a Fourier polynomial
\begin{equation}\label{QkDef}
Q_k(x)=\sum_{n=N_{k-1}}^{N_k-1}Y_n(x)
\end{equation}
with the following properties:

$1^\circ$. $|E_k|>|\Delta_k|(1-\delta)$,

$2^\circ$. $g_k(x)=\gamma_k$ for $x\in E_k$ and $g_k(x)=0$ for $x\notin\Delta_k$,

$3^\circ$. $|\gamma_k||\Delta_k|<\|g_k\|_1<2|\gamma_k||\Delta_k|$,

$4^\circ$. $\|Q_k-g_k\|_1<\frac1{2^{\nu_0+2}}\min\{\epsilon,\|f\|_1\}$,

$5^\circ$. and
$$
\max_{N_{k-1}\le m<N_k}\left\|\sum_{n=N_{k-1}}^mY_n\right\|_1\le\frac{|\gamma_k|\sqrt{|\Delta_k|(1+\delta)}}{\sqrt{\delta}}.
$$
Set
\begin{eqnarray}
E\doteq\bigcup_{k=1}^{\nu_0}E_k,\quad g(x)\doteq f(x)-\left[\Lambda(x)-\sum_{k=1}^{\nu_0}g_k(x)\right],\label{EDefgDef}\\
N\doteq N_{\nu_0}-1,\quad Q(x)\doteq\sum_{k=1}^{\nu_0}Q_k(x)=\sum_{n=N_0}^NY_n(x),\quad\forall x\in\mathcal{M}.\label{QDef}
\end{eqnarray}
First we check that from (\ref{StepFuncDef}), ($1^\circ$), ($2^\circ$) and (\ref{EDefgDef}) it follows that
\begin{eqnarray}
|E|=\sum_{k=1}^{\nu_0}|E_k|>\sum_{k=1}^{\nu_0}|\Delta_k|\left(1-\delta\right)=1-\delta,\\
x\in E\quad\Longrightarrow\quad
x\in E_k\quad\Longrightarrow\quad\Lambda(x)=\gamma_k=g_k(x),\,g_l(x)=0,\,l\neq k\quad\Longrightarrow\quad g(x)=f(x),
\end{eqnarray}
so that Statements 1 and 2 are proven. Next we observe using ($4^\circ$), (\ref{|Lambda-f|}), (\ref{EDefgDef}) and (\ref{QDef}) that
\begin{equation}
\|Q-g\|_1=\left\|\sum_{k=1}^{\nu_0}\left[Q_k-g_k\right]+\left[f-\Lambda\right]\right\|_1\le\sum_{k=1}^{\nu_0}\|Q_k-g_k\|_1+\|f-\Lambda\|_1<\epsilon,
\end{equation}
which proves Statement 4. Further, from (\ref{StepFuncDef}), (\ref{|Lambda-f|}), ($2^\circ$) and ($3^\circ$) we deduce that
\begin{eqnarray}
\|g\|_1\le\sum_{k=1}^{\nu_0}\|g_k\|_1+\|f-\Lambda\|_1\le2\sum_{k=1}^{\nu_0}|\gamma_k||\Delta_k|+\|f-\Lambda\|_1\nonumber\\
=2\|\Lambda\|_1+\|f-\Lambda\|_1\le3\|f-\Lambda\|_1+2\|f\|_1<3\|f\|_1.
\end{eqnarray}
Moreover, the same formulae also imply
\begin{eqnarray}
\|g\|_1+\frac13\|f\|_1>\|g\|_1+\|f-\Lambda\|_1\ge\|g-f+\Lambda\|_1\nonumber\\
=\sum_{k=1}^{\nu_0^2}\|g_k\|_1>\sum_{k=1}^{\nu_0^2}|\gamma_k||\Delta_k|=\|\Lambda\|_1\ge\bigl|\|\Lambda-f\|_1-\|f\|_1\bigr|>\frac23\|f\|_1,
\end{eqnarray}
i.e., $\|f\|_1<3\|g\|_1$, thus proving Statement 3. In order to prove Statement 5 let us fix an $N_0\le m\le N$. Then there is a $1\le k_0\le\nu_0$ such that $N_{k_0-1}\le m<N_{k_0}$, and thus by (\ref{QkDef}) and (\ref{QDef}) we have
\begin{equation}
\sum_{n=N_0}^mY_n(x)=\sum_{k=1}^{k_0-1}Q_k(x)+\sum_{N_{k_0-1}}^mY_n(x).
\end{equation}
Finally we use this along with formulae ($3^\circ$), ($4^\circ$), ($5^\circ$) and (\ref{FinePartIneq}) to obtain
\begin{eqnarray}
\left\|\sum_{n=N_0}^mY_n(x)\right\|_1\le\sum_{k=1}^{k_0-1}\left\|Q_k-g_k\right\|_1+\sum_{k=1}^{k_0-1}\|g_k\|_1+\left\|\sum_{k=N_{k_0-1}}^mY_n\right\|_1\nonumber\\
<\frac14\|f\|_1+2\|\Lambda\|_1+\frac{|\gamma_{k_0}|\sqrt{|\Delta_{k_0}|(1+\delta)}}{\sqrt{\delta}}<3\|f\|_1,
\end{eqnarray}
and this completes the proof. $\Box$
\end{proof}

\begin{lemma}\label{PolynomSeriesLemma} Let $\{R_k\}_{k=1}^\infty$ be any fixed ordering of the set of all nonzero Fourier polynomials with rational coefficients into a sequence. Then for every $f\in L^1(\mathcal{M})$ and sequence $\{b_s\}_{s=1}^\infty$ of positive numbers $b_s>0$ there exists subsequence $\{R_{k_s}\}_{s=0}^\infty$ such that

1. $\|R_{k_0}-f\|_1\le\frac12\|f\|_1$

2. $\|R_{k_s}\|_1<b_s$ for $s\ge1$

3. $\sum_{s=0}^\infty R_{k_s}=f$ in $L^1(\mathcal{M})$.
\end{lemma}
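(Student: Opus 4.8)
The plan is to build the subsequence greedily by a telescoping argument, exploiting that the enumeration $\{R_k\}$ exhausts a countable dense subset of $L^1(\mathcal{M})$. The first ingredient is density in $L^1(\mathcal{M})$. As already used in the proof of Lemma~\ref{Lemma1o}, every $h\in L^2(\mathcal{M})$ is the $L^2$-limit of its Fourier partial sums, so finite linear combinations of the $\varphi_n$ are dense in $L^2(\mathcal{M})$, hence --- the space being a probability space, so that $\|\cdot\|_1\le\|\cdot\|_2$ --- dense in $L^1(\mathcal{M})$ as well. Replacing each complex coefficient of such a combination by a nearby element of $\mathbb{Q}+i\mathbb{Q}$ changes the $L^1$ norm by at most $\sum|c_n-q_n|\,\|\varphi_n\|_1$, a finite sum that is small because $\|\varphi_n\|_1\le\|\varphi_n\|_2=1$; hence the nonzero rational Fourier polynomials, i.e.\ the members of $\{R_k\}$, are dense in $L^1(\mathcal{M})$. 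Moreover, adding arbitrarily small rational multiples of a single $\varphi_n$ shows that within any positive tolerance of any target there are in fact infinitely many of them.

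To construct the terms, set $\eta_s\doteq\tfrac12\min\{b_s,2^{-s}\}$ for $s\ge1$ and maintain, as an inductive invariant, that the remainder $\rho_s\doteq f-\sum_{j=0}^{s-1}R_{k_j}$ satisfies $\|\rho_s\|_1<\eta_s$. For the initial term I would choose $R_{k_0}$ with $\|f-R_{k_0}\|_1<\min\{\tfrac12\|f\|_1,\eta_1\}$, which is Statement~1 and fixes $\|\rho_1\|_1<\eta_1$. Assuming $R_{k_0},\dots,R_{k_{s-1}}$ chosen with $\|\rho_s\|_1<\eta_s$, I would pick a term $R_{k_s}$ of the sequence with $\|\rho_s-R_{k_s}\|_1<\min\{\eta_s,\eta_{s+1}\}$; this preserves the invariant, since $\|\rho_{s+1}\|_1=\|\rho_s-R_{k_s}\|_1<\eta_{s+1}$, while the triangle inequality yields $\|R_{k_s}\|_1\le\|\rho_s\|_1+\|\rho_s-R_{k_s}\|_1<2\eta_s=\min\{b_s,2^{-s}\}\le b_s$, which is Statement~2. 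Because infinitely many admissible rational polynomials are available at each step (including when $\rho_s=0$, where one simply takes a nonzero $R_{k_s}$ of tiny norm), I can always keep the indices $k_s$ strictly increasing, so $\{R_{k_s}\}$ is a genuine subsequence. Finally, the gauge $2^{-s}$ forces $\|\rho_{s+1}\|_1<\eta_{s+1}\le2^{-(s+2)}\to0$, so the partial sums $\sum_{j=0}^{s-1}R_{k_j}=f-\rho_s$ converge to $f$ in $L^1(\mathcal{M})$, which is Statement~3.

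The step I expect to be most delicate is meeting Statements~2 and~3 simultaneously: the bounds $b_s$ are arbitrary and may neither tend to zero nor be summable, so one cannot simply read the small increments off the partial sums of a single convergent expansion of $f$. The decisive device is to keep the running remainder $\rho_s$ strictly below $\tfrac12 b_s$, so that any sufficiently good approximation of it is automatically of norm below $b_s$, while forcing $\rho_s\to0$ through the independent summable gauge $2^{-s}$ built into $\eta_s$; this decouples the requirement that each term be small from the requirement that the series converge to $f$.
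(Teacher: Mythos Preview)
Your argument is correct and is essentially the same as the paper's: both proofs establish $L^1$-density of the rational Fourier polynomials (via $L^2$-density and $\|\cdot\|_1\le\|\cdot\|_2$), then build the subsequence greedily by keeping the running remainder $f-\sum_{j\le s}R_{k_j}$ below a threshold of the form $\tfrac12\min\{b_s,b_{s+1},\text{gauge}_s\}$, the paper using $1/s$ where you use $2^{-s}$. Your extra care about strictly increasing indices and the case $\rho_s=0$ is a small refinement over the paper's presentation, but the method is identical.
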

\begin{proof} Let us first convince ourselves that Fourier polynomials with rational coefficients are dense in $L^1(\mathcal{M})$. Indeed, by the assumption of separability, step functions are dense in $L^1(\mathcal{M})$, but they all belong also to the separable Hilbert space $L^2(\mathcal{M})$. On the other hand, Fourier polynomials are clearly dense in $L^2(\mathcal{M})$. And finally, an arbitrary Fourier polynomial can be approximated in $L^2(\mathcal{M})$ by a Fourier polynomial with rational coefficients (this amounts to approximating the Fourier coefficients by rational numbers). A three-epsilon argument together with $\|.\|_1\le\|.\|_2$ then yields the assertion.

Using the denseness of $\{R_k\}_{k=1}^\infty$ let us choose a natural number $k_0\in\mathbb{N}$ such that
\begin{equation}
\|R_{k_0}-f\|_1\le\frac12\min\left\{\|f\|_1, b_1\right\}.
\end{equation}
Then we can choose further natural numbers $k_s\in\mathbb{N}$ iteratively as follows. For every $s\in\mathbb{N}$, again by using the denseness argument, choose a number $k_s$ so that $k_s>k_{s-1}$ and
\begin{equation}
\left\|f-\sum_{r=0}^sR_{k_r}\right\|_1=\left\|R_{k_s}-\left(f-\sum_{r=0}^{s-1}R_{k_r}\right)\right\|_1<\frac12\min\left\{b_s, b_{s+1},\frac1s\right\},\quad\forall s\in\mathbb{N}.
\end{equation}
Statements 1 and 3 are clearly satisfied. For Statement 2 we have
\begin{eqnarray}
\left\|R_{k_s}\right\|_1=\left\|R_{k_s}-\left(f-\sum_{r=0}^{s-1}R_{k_r}\right)+\left(f-\sum_{r=0}^{s-1}R_{k_r}\right)\right\|_1\nonumber\\
\le\left\|R_{k_s}-\left(f-\sum_{r=0}^{s-1}R_{k_r}\right)\right\|_1+\left\|f-\sum_{r=0}^{s-1}R_{k_r}\right\|_1<\frac12b_s+\frac12b_s=b_s,\quad\forall s\in\mathbb{N}.
\end{eqnarray}
Lemma is proven. $\Box$
\end{proof}

\subsection*{The main theorem}

Here we will prove Theorem \ref{MainTheorem} for the particular case of $(\mathcal{M},\Sigma,\mu)$ being a separable cylindric probability space as in (\ref{MCylindric}).

\begin{proof} Recall that $\epsilon,\delta>0$ and $f\in L^1(\mathcal{M})$ with $\|f\|_1>0$ are given. Denote
\begin{equation}\label{epsilon0}
\epsilon_0\doteq\min\left\{\epsilon,\|f\|_1\right\}.
\end{equation}
Let $\{R_k\}_{k=1}^\infty$ be any ordering of the set of all nonzero Fourier polynomials with rational coefficients into a sequence. Iteratively applying Lemma \ref{Lemma1} with
\begin{equation}
f\leftarrow R_k,\quad\epsilon\leftarrow\frac{\epsilon_0}{2^{k+7}},\quad\delta\leftarrow\frac\delta{2^k},\quad N_0\leftarrow N_{k-1}
\end{equation}
for $k=1,2,\dots$ we obtain for each $k\in\mathbb{N}$ a subset $\tilde E_k\in\Sigma$, a function $\tilde g_k\in L^\infty(\mathcal{M})$, a number $N_{k-1}\le N_k\in\mathbb{N}$ (set $N_0=0$) and a Fourier polynomial
\begin{equation}
\tilde Q_k(x)=\sum_{n=N_{k-1}}^{N_k-1}\tilde Y_n(x)
\end{equation}
with the following properties:

$1^\dagger$. $|\tilde E_k|>1-\frac\delta{2^k}$,

$2^\dagger$. $x\in\tilde E_k$ implies $\tilde g_k(x)=R_k(x)$,

$3^\dagger$. $\frac13\|R_k\|_1<\|\tilde g_k\|_1<3\|R_k\|_1$,

$4^\dagger$. $\|\tilde g_k-\tilde Q_k\|_1<\epsilon_02^{-k-7}$,

$5^\dagger$. and
$$
\sup_{N_{k-1}\le m<N_k}\left\|\sum_{n=N_{k-1}}^m\tilde Y_n\right\|_1<3\|R_k\|_1.
$$
Define the desired set $E$ as
\begin{equation}
E\doteq\bigcap_{k=1}^\infty\tilde E_k.
\end{equation}
Observe from ($1^\dagger$) that
\begin{equation}
|E|=1-|\mathcal{M}\setminus E|\ge1-\sum_{s=1}^\infty|\mathcal{M}\setminus\tilde E_k|>1-\sum_{k=1}^\infty\frac\delta{2^k}=1-\delta.
\end{equation}
Note that $E$ is universal, i.e., independent of $f$.

Let $\{R_{k_s}\}_{s=0}^\infty$ be the subsequence of Fourier polynomials provided by Lemma \ref{PolynomSeriesLemma} applied with
\begin{equation}
f\leftarrow f,\quad b_s\leftarrow\frac{\epsilon_0}{2^{s+6}}.
\end{equation}
It satisfies

$1^\circ$. $\|R_{k_0}-f\|_1\le\frac12\|f\|_1$,

$2^\circ$. $\|R_{k_s}\|_1<\epsilon_02^{-s-6}$ for $s\ge1$,

$3^\circ$. $\sum_{s=0}^\infty R_{k_s}=f$ in $L^1(\mathcal{M})$.

\noindent
We want to use mathematical induction in order to define a sequence of natural numbers \newline${1<\nu_1<\nu_2<\ldots}$ and a sequence of functions $\{g_s\}_{s=1}^\infty$, $g_s\in L^1(\mathcal{M})$, such that for all $s\in\mathbb{N}$ we have

$1^*.$ $x\in\tilde E_{\nu_s}$ implies $g_s(x)=R_{k_s}(x)$,

$2^*.$ $\|g_s\|_1<\epsilon_02^{-s-2}$,

$3^*.$
$$
\left\|\sum_{j=1}^s[\tilde Q_{\nu_j}-g_j]\right\|_1<\frac{\epsilon_0}{2^{s+6}},
$$

$4^*.$
$$
\max_{N_{\nu_s-1}\le m<N_{\nu_s}}\left\|\sum_{n=N_{\nu_s-1}}^m\tilde Y_n\right\|_1<\frac{\epsilon_0}{2^s}.
$$
Assume that for some $s\in\mathbb{N}$, the choice of $1<\nu_1<\nu_2<\ldots<\nu_{s-1}$ and $g_1, g_2,\ldots,g_{s-1}$ satisfying ($3^*$) has been already made (for $s=1$ this is trivially correct). Remember that by $\sigma(h)$  we have denoted the $\{\varphi_n\}$-spectrum of a function $h\in L^1(\mathcal{M})$, i.e., the support of its Fourier series. Using the denseness of $\{R_k\}_{k=1}^\infty$ (see Lemma \ref{PolynomSeriesLemma}) choose a natural number $\nu_s\in\mathbb{N}$ such that $N_{\nu_1-1}>\max\sigma(R_{k_0})$ and $\nu_s>\nu_{s-1}$ for $s>1$, and
\begin{equation}\label{nuqDef}
\left\|R_{\nu_s}-\left(R_{k_s}-\sum_{j=1}^{s-1}[\tilde Q_{\nu_j}-g_j]\right)\right\|_1<\frac{\epsilon_0}{2^{s+7}}.
\end{equation}
Then by ($2^\circ$) and ($3^*$) we have for all $s\in\mathbb{N}$ that
\begin{equation}
\left\|R_{k_s}-\sum_{j=1}^{s-1}[\tilde Q_{\nu_j}-g_j]\right\|_1\le\|R_{k_s}\|_1+\left\|\sum_{j=1}^{s-1}[\tilde Q_{\nu_j}-g_j]\right\|_1<\frac{3\,\epsilon_0}{2^{s+6}},
\end{equation}
which combined with (\ref{nuqDef}) implies
\begin{equation}\label{RnusEst}
\|R_{\nu_s}\|_1\le\left\|R_{\nu_s}-\left(R_{k_s}-\sum_{j=1}^{s-1}[\tilde Q_{\nu_j}-g_j]\right)\right\|_1+\left\|R_{k_s}-\sum_{j=1}^{s-1}[\tilde Q_{\nu_j}-g_j]\right\|_1<\frac{7\,\epsilon_0}{2^{s+7}}.
\end{equation}
Set
\begin{equation}
g_s(x)\doteq R_{k_s}(x)+\tilde g_{\nu_s}(x)-R_{\nu_s}(x).
\end{equation}
Condition ($1^*$) is easily satisfied thanks to ($2^\dagger$) with $k=\nu_s$. For condition ($2^*$) we write
\begin{eqnarray}\label{|gs|Est}
\|g_s\|_1=\|R_{k_s}+\tilde g_{\nu_s}-R_{\nu_s}\|_1\nonumber\\
\le\left\|R_{\nu_s}-R_{k_s}+\sum_{j=1}^{s-1}[\tilde Q_{\nu_j}-g_j]\right\|_1+\left\|\sum_{j=1}^{s-1}[\tilde Q_{\nu_j}-g_j]\right\|_1+\|\tilde g_{\nu_s}\|_1<\frac{\epsilon_0}{2^{s+2}},
\end{eqnarray}
where we used (\ref{nuqDef}), ($3^\dagger$), ($3^*|_{s-1}$) and (\ref{RnusEst}) in the last step. To show that condition ($3^*$) is satisfied we observe that
\begin{eqnarray}
\left\|\sum_{j=1}^s[\tilde Q_{\nu_j}-g_j]\right\|_1=\left\|\tilde Q_{\nu_s}-g_s+\sum_{j=1}^{s-1}[\tilde Q_{\nu_j}-g_j]\right\|_1=\left\|\tilde Q_{\nu_s}-R_{k_s}-\tilde g_{\nu_s}+R_{\nu_s}+\sum_{j=1}^{s-1}[\tilde Q_{\nu_j}-g_j]\right\|_1\nonumber\\
\le\left\|\tilde Q_{\nu_s}-\tilde g_{\nu_s}\right\|_1+\left\|R_{\nu_s}-\left(R_{k_s}-\sum_{j=1}^{s-1}[\tilde Q_{\nu_j}-g_j]\right)\right\|_1<\frac{\epsilon_0}{2^{s+6}},
\end{eqnarray}
where ($4^\dagger$), (\ref{nuqDef}) and $\nu_s>s$ were used in the second inequality. Finally we satisfy condition ($4^*$) using ($5^\dagger$) and (\ref{RnusEst}),
\begin{equation}
\max_{N_{\nu_s-1}\le m<N_{\nu_s}}\left\|\sum_{n=N_{\nu_s-1}}^m\tilde Y_n\right\|_1<3\|R_{\nu_s}\|_1<\frac{\epsilon_0}{2^s}.
\end{equation}
The iteration is thus complete, and by mathematical induction we construct the sequences $\{\nu_s\}_{s=1}^\infty$ and $\{g_s\}_{s=1}^\infty$ satisfying conditions ($1^*$) through ($4^*$) for all $s\in\mathbb{N}$.
Define
\begin{equation}\label{gDef}
g(x)\doteq R_{k_0}(x)+\sum_{s=1}^\infty g_s(x),\quad\forall x\in\mathcal{M}.
\end{equation}
From (\ref{|gs|Est}) it follows that
\begin{equation}\label{Sum|gs|Est}
\sum_{s=1}^\infty\|g_s\|_1<\frac{13\,\epsilon_0}{64}<\infty,
\end{equation}
thus $g\in L^1(\mathcal{M})$. The construction is now complete, and it remains to verify the statements of the theorem.

To prove Statement 2 of the theorem we note that $x\in E$ means $x\in \tilde E_{\nu_s}$, and hence by ($2^\dagger$) $g_s(x)=R_{k_s}(x)$ for all $s\in\mathbb{N}$. It then follows from ($3^\circ$) that
\begin{equation}
g(x)=R_{k_0}(x)+\sum_{s=1}^\infty g_s(x)=\sum_{s=0}^\infty R_{k_s}(x)=f(x),\quad\forall x\in E.
\end{equation}
Let $\{Y_n\}_{n=1}^\infty$ be the series of $Y_n=c_n\varphi_n$ such that
\begin{equation}
\sum_{n=1}^{N_{\nu_s}-1}Y_n=R_{k_0}+\sum_{j=1}^s\tilde Q_{\nu_j},\quad\forall s\in\mathbb{N}.
\end{equation}
Let $m\in\mathbb{N}$, and let $r\in\mathbb{N}$ be the largest natural number such that $N_{\nu_{r}-1}\le m$ (if $m<N_{\nu_1-1}$ set $r=1$). Set ${m_*\doteq\min\{m,N_{\nu_{r}}-1\}}$. Then by (\ref{gDef}), ($3^*$), ($4^*$) and (\ref{|gs|Est}) we get
\begin{eqnarray}
\left\|\sum_{n=1}^m Y_n-g\right\|_1=\left\|\sum_{n=1}^{m_*} Y_n-g\right\|_1=\left\|\sum_{j=1}^{r-1}\tilde Q_{\nu_j}+\sum_{n=N_{\nu_{r}-1}}^{m_*}\tilde Y_n-\sum_{j=1}^{r-1}g_j-\sum_{j=r}^\infty g_j\right\|_1\nonumber\\
\le\left\|\sum_{j=1}^{r-1}\left[\tilde Q_{\nu_j}-g_j\right]\right\|_1+\left\|\sum_{n=N_{\nu_{r}-1}}^{m_*}\tilde Y_n\right\|_1+\left\|\sum_{j=r}^\infty g_j\right\|_1<\frac{23\,\epsilon_0}{2^{r+4}}.\label{EstFourRem}
\end{eqnarray}
Now as $m\to\infty$ obviously $r\to\infty$ as well, thus making the above expression vanish, which proves that $\sum Y_n$ is the Fourier series of $g$, i.e., $Y_n=Y_n(g)$, and it converges to $g$ as required in Statement 3. Further, from ($2^\circ$), ($3^\circ$), (\ref{gDef}) and (\ref{Sum|gs|Est}) we have that
\begin{equation}\label{|f-g|Est}
\left\|f-g\right\|_1=\left\|\sum_{s=1}^\infty R_{k_s}-\sum_{s=1}^\infty g_s\right\|_1\le\sum_{s=1}^\infty \|R_{k_s}\|_1+\sum_{s=1}^\infty\|g_s\|_1<\frac{7\,\epsilon_0}{32},
\end{equation}
which in view of (\ref{epsilon0}) proves Statement 1. Finally, using (\ref{EstFourRem}) and (\ref{|f-g|Est}) we establish that
\begin{equation}
\left\|\sum_{n=1}^m Y_n\right\|_1\le\left\|\sum_{n=1}^m Y_n-g\right\|_1+\left\|f-g\right\|_1+\left\|f\right\|_1<\frac{15\,\epsilon_0}{16}+\|f\|_1<2\|f\|_1,
\end{equation}
but also
\begin{equation}\label{PartialSumIntermed}
\left\|\sum_{n=1}^m Y_n\right\|_1\le\left\|\sum_{n=1}^m Y_n-g\right\|_1+\left\|g\right\|_1<\frac{23}{32}\|f\|_1+\|g\|_1.
\end{equation}
Note that by (\ref{|f-g|Est})
\begin{equation}
\|f\|_1\le\|g\|_1+\|f-g\|_1<\|g\|_1+\frac7{32}\|f\|_1,
\end{equation}
and therefore $25\|f\|_1<32\|g\|_1$. This together with (\ref{PartialSumIntermed}) yields
\begin{equation}
\left\|\sum_{n=1}^m Y_n\right\|_1<\frac{23}{25}\|g\|_1+\|g\|_1<2\|g\|_1,
\end{equation}
which establishes Statement 4. The proof of the theorem is accomplished. $\Box$
\end{proof}

\section*{Compact groups}

Let $G$ be a compact Hausdorff topological group, $\Sigma$ the Borel $\sigma$-algebra and $d\mu(x)=dx$ the normalized Haar measure. Then $\mu$ is diffuse if and only if $G$ is infinite, which we will assume here. Now by \cite[Theorem 28.2]{HeRo70} we have that $\dim L^2(G)=\operatorname{w}(G)$, therefore $(G,\Sigma,\mu)$ is separable if and only if $G$ is second countable. This will also be assumed in what follows. This implies in particular, through Peter-Weyl theorem, that the dual $\hat G$ is countable. For a detailed exposition of harmonic analysis on compact groups consult, e.g., \cite{RuTu09} or \cite{Fol15}.

For every irreducible unitary representation $\rho\in\hat G$ (or rather $[\rho]\in\hat G$), let $\mathcal{H}_\rho$ be the representation Hilbert space of dimension $\dim\mathcal{H}_\rho\doteq d_\rho\in\mathbb{N}$. Choose an arbitrary orthonormal basis $\{e^\rho_i\}_{i=1}^{d_\rho}$ of $\mathcal{H}_\rho$, and denote
\begin{equation}
\varphi_{\rho,i,j}(x)\doteq\sqrt{d_\rho}(\rho(x)e^\rho_j, e^\rho_i),\quad\forall x\in G,\quad i,j=1,\dots,d_\rho,\quad\forall\rho\in\hat G.
\end{equation}
By the Peter-Weyl theorem $\{\varphi_{\rho,i,j}\}$ is an orthonormal basis in $L^2(G)$. Moreover, since $\rho(x)$ is unitary for all $x\in G$, we get
\begin{equation}
|\varphi_{\rho,i,j}(x)|=\sqrt{d_\rho}\,|(\rho(x)e^\rho_j,e^\rho_i)|\le\sqrt{d_\rho}\,\|\rho(x)e^\rho_j\|\|e^\rho_i\|\le\sqrt{d_\rho},
\end{equation}
so that $\varphi_{\rho,i,j}\in L^\infty(G)$. Therefore, if we put an arbitrary (total) order on $\hat G$ then Theorem \ref{MainTheorem} is directly applicable to $(G,\Sigma,\mu)$ with $\{\varphi_{\rho,i,j}\}$.

But the arbitrary choice of the bases $\{e^\rho_i\}_{i=1}^{d_\rho}$ is artificial from the viewpoint of the group $G$. The more natural construction is the operator valued Fourier transform,
\begin{equation}
\hat f(\rho)=\int_Gf(x)\rho^*(x)dx,\quad\forall f\in L^1(G),
\end{equation}
and the corresponding block Fourier series
\begin{eqnarray}
\sum_{\rho\in\hat G}d_\rho\tr\left[\hat f(\rho)\rho(x)\right]=\sum_{\rho\in\hat G}\sum_{i,j=1}^{d_\rho}Y_{\rho,i,j}(x,f),\nonumber\\
Y_{\rho,i,j}(x,f)=c_{\rho,i,j}(f)\varphi_{\rho,i,j}(x),\quad c_{\rho,i,j}(f)=(f,\varphi_{\rho,i,j})_2.
\end{eqnarray}
More generally, if we work on a homogeneous space $\mathcal{M}\simeq G/H$ of a compact group $G$ as above with (closed) isotropy subgroup $H\subset G$, then we define multiplicities
\begin{equation}
d^H_\rho=\operatorname{mult}(\id,\rho\,\vline_H),\quad\forall\rho\in\hat G
\end{equation}
(this reduces to $d^H_\rho=d_\rho$ if $H=\{\id\}$ as before). Moreover, we restrict to
\begin{equation}
\widehat{G/H}=\left\{\rho\in\hat G\,\vline\quad d^H_\rho>0\right\}.
\end{equation}
A point $x\in G/H$ is a coset $x=\mathrm{x}H$, $\mathrm{x}\in G$. If $dh$ is the normalized Haar measure on $H$ then there is a unique normalized left $G$-invariant measure $\mu$ on $G/H$ (the pullback of $d\mathrm{x}$ through the quotient map) such that
\begin{equation}
\int_Gf(\mathrm{x})d\mathrm{x}=\int_{G/H}\left(\int_Hf(\mathrm{x}h)dh\right)d\mu(x),\quad\forall f\in C(G).
\end{equation}
Denote
\begin{equation}
\mathbb{P}_H\doteq\int_H\rho(h)dh,\quad\rho(x)\doteq\rho(\mathrm{x})\mathbb{P}_H,\quad\forall\rho\in\widehat{G/H},\quad\forall x=\mathrm{x}H\in G/H.
\end{equation}
Note that $d^H_\rho=\dim\mathbb{P}_H\mathcal{H}_\rho$. Let $\{e^\rho_i\}_{i=1}^{d_\rho}$ be an orthonormal basis in $\mathcal{H}_\rho$ as before, and choose an orthonormal basis $\{e^\rho_\alpha\}_{\alpha=1}^{d^H_\rho}$ in $\mathbb{P}_H\mathcal{H}_\rho$. Now the Fourier transform of a function $f\in L^1(G/H)$ becomes
\begin{equation}
\hat f(\rho)=\int_{G/H}f(x)\rho^*(x)d\mu(x),\quad\forall\rho\in\widehat{G/H},
\end{equation}
and the corresponding block Fourier series is
\begin{eqnarray}\label{BlockFourierSeries}
\sum_{\rho\in\widehat{G/H}}d_\rho\tr\left[\hat f(\rho)\rho(x)\right]=\sum_{\rho\in\widehat{G/H}}\sum_{i=1}^{d_\rho}\sum_{\alpha=1}^{d^H_\rho}Y_{\rho,i,\alpha}(x;f),\\
Y_{\rho,i,\alpha}(x;f)=c_{\rho,i,\alpha}(f)\varphi_{\rho,i,\alpha}(x),\quad c_{\rho,i,\alpha}(f)=(f,\varphi_{\rho,i,\alpha})_2,\quad\varphi_{\rho,i,\alpha}(x)=\sqrt{d_\rho}(\rho(x)e^\rho_\alpha,e^\rho_i).
\end{eqnarray}
If the homogeneous space $G/H$ is infinite then the invariant measure $\mu$ is diffuse. Applying Theorem \ref{MainTheorem} to $(G/H,\Sigma_H,\mu)$ ($\Sigma_H$ is the Borel $\sigma$-algebra on $G/H$) and the system $\{\varphi_{\rho,i,\alpha}\}$ with $\widehat{G/H}$ ordered arbitrarily, we obtain the following modification.
\begin{theorem}\label{MainTheoremGroup} Let $\mathcal{M}=G/H$ be an infinite homogeneous space of a compact second countable Hausdorff group $G$ with closed isotropy subgroup $H\subset G$. For every $\epsilon,\delta>0$ there exists a measurable subset $E\in\Sigma_H$ with measure $|E|>1-\delta$ and with the following property; for each function $f\in L^1(G/H)$ with $\|f\|_1>0$ there exists an approximating function $g\in L^1(G/H)$ that satisfies:

1. $\|f-g\|_1<\epsilon$,

2. $f=g$ on $E$,

3. the block Fourier series (\ref{BlockFourierSeries}) of $g$ converges in $L^1(G/H)$,

4. we have
$$
\sup_{\rho\in\widehat{G/H}}\left\|\sum_{\varrho\le\rho}d_\varrho\tr\left[\hat g(\varrho)\varrho(x)\right]\right\|_1<2\min\left\{\|f\|_1,\|g\|_1\right\}.
$$
\end{theorem}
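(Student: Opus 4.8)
The plan is to obtain Theorem \ref{MainTheoremGroup} as a direct specialization of Theorem \ref{MainTheorem}, the only genuine content being the translation of Statements 3 and 4 from the linear Fourier series to the block Fourier series. First I would verify that the data $(G/H,\Sigma_H,\mu)$ and the system $\{\varphi_{\rho,i,\alpha}\}$ satisfy the hypotheses of Theorem \ref{MainTheorem}. The measure $\mu$ is finite (it is normalized, so $|G/H|=1$), it is diffuse because $G/H$ is infinite (as recalled just above the statement), and $(G/H,\Sigma_H,\mu)$ is separable since second countability of $G$ passes to $G/H$ and hence to all $L^p(G/H)$. The matrix coefficients $\varphi_{\rho,i,\alpha}(x)=\sqrt{d_\rho}(\rho(x)e^\rho_\alpha,e^\rho_i)$ are bounded by $\sqrt{d_\rho}$ by the same unitarity estimate displayed above (using $\rho(x)=\rho(\mathrm{x})\mathbb{P}_H$ with $\|\mathbb{P}_H\|\le 1$), and they are orthonormal in $L^2(G/H)$ by the Schur orthogonality relations adapted to the quotient. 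In fact Peter--Weyl together with Frobenius reciprocity shows they form a complete orthonormal basis, with $\rho$ occurring with multiplicity $d^H_\rho=\dim\mathbb{P}_H\mathcal{H}_\rho$, but only orthonormality and boundedness are needed here, which are exactly the hypotheses of Theorem \ref{MainTheorem}.

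Next I would fix the enumeration of the basis. Since $\widehat{G/H}\subset\hat G$ is countable, fix a total order on $\widehat{G/H}$ with finite initial segments (equivalently, an enumeration $\rho_1,\rho_2,\ldots$) and list, consecutively, for each $\rho$ in this order all the finitely many functions $\varphi_{\rho,i,\alpha}$ with $1\le i\le d_\rho$ and $1\le\alpha\le d^H_\rho$. This yields a single orthonormal system $\{\varphi_n\}_{n=1}^\infty$ to which Theorem \ref{MainTheorem} applies, producing the universal set $E$ with $|E|>1-\delta$ and, for each $f$ with $\|f\|_1>0$, an approximant $g$ fulfilling Statements 1--4 relative to this linear ordering. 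Statements 1 and 2 of Theorem \ref{MainTheoremGroup} coincide verbatim with Statements 1 and 2 of Theorem \ref{MainTheorem}, so they require no further work.

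The crux is the passage to block sums, and the consecutive listing makes it routine. By construction, for each $\rho\in\widehat{G/H}$ the block partial sum $\sum_{\varrho\le\rho}d_\varrho\tr[\hat g(\varrho)\varrho(x)]=\sum_{\varrho\le\rho}\sum_{i,\alpha}Y_{\varrho,i,\alpha}(x;g)$ equals the linear partial sum $\sum_{n=1}^{m_\rho}Y_n(g)$, where $m_\rho$ is the total number of basis functions attached to the representations $\varrho\le\rho$. Thus $\rho\mapsto m_\rho$ is strictly increasing and the block partial sums form a subsequence of the sequence of ordinary partial sums. Convergence of $\sum_n Y_n(g)$ in $L^1(G/H)$ (Statement 3 of Theorem \ref{MainTheorem}) therefore forces convergence of the block series to the same limit, yielding Statement 3; and since a supremum over a subsequence is dominated by the supremum over the full sequence, Statement 4 of Theorem \ref{MainTheorem} gives $\sup_\rho\|\sum_{\varrho\le\rho}d_\varrho\tr[\hat g(\varrho)\varrho(x)]\|_1\le\sup_m\|\sum_{n=1}^m Y_n(g)\|_1<2\min\{\|f\|_1,\|g\|_1\}$, which is Statement 4.

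I expect no substantive obstacle, since the theorem is a corollary of Theorem \ref{MainTheorem}. The only points demanding care are the harmonic-analytic bookkeeping of the first paragraph---namely, verifying orthonormality (and, if desired, completeness) of the $\varphi_{\rho,i,\alpha}$ on the quotient---and the observation that the chosen enumeration renders each block the increment between two initial segments, so that block sums are genuinely a subsequence of ordinary partial sums. This last identification is the conceptual heart of the argument, and once it is made precise, Statements 3 and 4 are immediate.
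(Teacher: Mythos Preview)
Your proposal is correct and matches the paper's approach exactly: the paper derives Theorem \ref{MainTheoremGroup} in a single sentence by ``Applying Theorem \ref{MainTheorem} to $(G/H,\Sigma_H,\mu)$ \ldots\ and the system $\{\varphi_{\rho,i,\alpha}\}$ with $\widehat{G/H}$ ordered arbitrarily,'' and you have simply unpacked that sentence, making explicit the consecutive enumeration that renders block partial sums a subsequence of ordinary partial sums.
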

Note that $E$ depends on the chosen order in $\widehat{G/H}$.

As discussed before, the proof of the above theorem becomes more constructive and transparent if we have a natural cylindric structure in $G/H$. Then the proof based on the cylindric structure becomes directly applicable (without intermediate measurable transformations), and each step in the proof retains its original interpretation in terms of cylindrical coordinates. To this avail, below we will establish a natural Borel almost isomorphism of measure spaces between $G/H$ and the cylindric space $K\times K\backslash G/H$ for certain infinite closed subgroups $K\subset G$, which will provide an obvious identification of all spaces $L^p(G/H)$ and $L^p(K\times K\backslash G/H)$. Note that since $K$ is infinite and compact, the probability space $(K,dk)$ is isomorphic to the unit interval.
\begin{definition} A measurable map between two measure spaces $\varphi:(\Omega_1,\mu_1)\to(\Omega_2,\mu_2)$ is an almost isomorphism of measure spaces if there exist full measure subspaces $X\subset\Omega_1$ and $Y\subset\Omega_2$, $|\Omega_1\setminus X|=|\Omega_2\setminus Y|=0$, such that the restriction of $\varphi$ is an isomorphism of measure spaces $\varphi|_X:(X,\mu_1)\to(Y,\mu_2)$.
\end{definition}

If $K\subset G$ is a closed subgroup of $G$ then denote by $G/H^{(K)}\subset G/H$ the set of those points $x\in G/H$ with a non-trivial stabilizer within $K$,
\begin{equation}
G/H^{(K)}=\left\{x\in G/H\,\vline\quad\exists\,\id\neq k\in K\quad\mbox{s.t.}\quad kx=x\right\}.
\end{equation}
Let us now fix an infinite closed subgroup $K\subset G$, and let $\operatorname{q}:G/H\to K\backslash G/H$ be the natural quotient map. Then the pullback measure $\nu=\mu\circ\operatorname{q}^{-1}=\operatorname{q}^*\mu$ is the natural probability measure on $K\backslash G/H$. Provided that the subset $G/H^{(K)}$ in $G/H$ is $\mu$-null, we obtain a natural product structure in the following way.
\begin{proposition}\label{CylStrCompHomSpProp} If $\left|G/H^{(K)}\right|_\mu=0$ then there exists a Borel almost isomorphism $\varphi:K\times K\backslash G/H\to G/H$ such that
\begin{equation}
\varphi(k'k,Kx)=k'\varphi(k,Kx),\quad
\operatorname{q}\left(\varphi(k,Kx)\right)=Kx,\quad\forall k,k'\in K,\quad\forall Kx\in K\backslash G/H.\label{varphiprops}
\end{equation}
\end{proposition}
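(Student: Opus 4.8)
The plan is to realize the almost isomorphism as the orbit map of a Borel section of $\operatorname{q}$. First I would construct a Borel section $s:K\backslash G/H\to G/H$ of the quotient map, i.e. a Borel map with $\operatorname{q}\circ s=\id$, and then simply set $\varphi(k,Kx)\doteq k\cdot s(Kx)$, where $k\cdot(\mathrm{x}H)\doteq(k\mathrm{x})H$ denotes the left $K$-action. With this definition the two identities in (\ref{varphiprops}) are immediate algebraic consequences of the action axioms together with the $K$-invariance of $\operatorname{q}$: indeed $\varphi(k'k,Kx)=(k'k)\cdot s(Kx)=k'\cdot\varphi(k,Kx)$, while $\operatorname{q}(\varphi(k,Kx))=\operatorname{q}(k\cdot s(Kx))=\operatorname{q}(s(Kx))=Kx$, because $\operatorname{q}$ is constant on $K$-orbits. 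Borel measurability of $\varphi$ is clear, since it is the composition of the Borel map $(k,Kx)\mapsto(k,s(Kx))$ with the continuous action map $K\times G/H\to G/H$.

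The existence of the section is the \emph{main technical point}. Since $G$ is compact, second countable and Hausdorff it is metrizable, so $G/H$ is a compact metric space; the orbits of the compact group $K$ being closed, the orbit space $K\backslash G/H$ is compact metrizable as well, and $\operatorname{q}$ is a continuous surjection of compact metric spaces. I would invoke the Federer--Morse selection theorem (or any equivalent measurable selection result, e.g.\ Kuratowski--Ryll-Nardzewski), which furnishes a Borel set $\Sigma\ni D\subset G/H$ meeting each fiber of $\operatorname{q}$ in exactly one point and with $\operatorname{q}|_D$ a Borel isomorphism onto $K\backslash G/H$; its inverse is the desired section $s$.

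Next I would isolate the null sets along which $\varphi$ fails to be bijective. For the left action the $K$-stabilizer of $k_0\cdot y$ is the conjugate $k_0K_yk_0^{-1}$, which is trivial if and only if $K_y$ is; hence the property of having a nontrivial $K$-stabilizer is constant along orbits, so $G/H^{(K)}$ is a union of full fibers, $G/H^{(K)}=\operatorname{q}^{-1}(B)$ with $B\doteq\operatorname{q}\left(G/H^{(K)}\right)$. By the hypothesis and $\nu=\operatorname{q}^*\mu$ we get $\nu(B)=\mu\left(G/H^{(K)}\right)=0$. On $X\doteq K\times\big((K\backslash G/H)\setminus B\big)$ the section lands in the free part, so $\varphi|_X$ is injective: if $k_1\cdot s(Kx_1)=k_2\cdot s(Kx_2)$, applying $\operatorname{q}$ forces $Kx_1=Kx_2=:Kx$, and then $(k_2^{-1}k_1)\cdot s(Kx)=s(Kx)$ with $s(Kx)\notin G/H^{(K)}$ gives $k_1=k_2$. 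Since $K$ acts transitively on each fiber, the image of $X$ is exactly $Y\doteq G/H\setminus G/H^{(K)}$, a set of full $\mu$-measure. Thus $\varphi|_X:X\to Y$ is a Borel bijection of standard Borel spaces, whose inverse is automatically Borel.

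It remains to check that $\varphi$ transports $dk\times\nu$ to $\mu$, and here the invariance does all the work. For $F\in C(G/H)$ set $\Phi(y)\doteq\int_KF(k\cdot y)\,dk$; left-invariance of Haar measure on $K$ makes $\Phi$ $K$-invariant, so it descends to $\bar\Phi$ on $K\backslash G/H$ with $\bar\Phi\circ\operatorname{q}=\Phi$ and $\bar\Phi(Kx)=\Phi(s(Kx))$. Then by Fubini,
$$
\int_{K\times K\backslash G/H}F(\varphi(k,Kx))\,dk\,d\nu=\int_{K\backslash G/H}\bar\Phi\,d\nu=\int_{G/H}\Phi\,d\mu=\int_{G/H}F\,d\mu,
$$
the last equality using the $K$-invariance of $\mu$ and $\int_Kdk=1$. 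Hence $\varphi_*(dk\times\nu)$ and $\mu$ agree on $C(G/H)$ and therefore coincide as Radon measures, so $\varphi|_X$ is measure-preserving and $\varphi$ is the claimed Borel almost isomorphism. I expect the construction of the Borel section to be the only genuinely nontrivial ingredient; the equivariance is formal, the null-set bookkeeping uses only the hypothesis, and the measure identity reduces to Fubini together with invariance.
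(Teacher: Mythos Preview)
Your proposal is correct and follows essentially the same route as the paper: both invoke the Federer--Morse theorem to obtain a Borel section of $\operatorname{q}$, define $\varphi(k,Kx)=k\cdot s(Kx)$, verify injectivity away from $G/H^{(K)}$ by the same stabilizer argument, and check that the complements are null. The only cosmetic difference is that the paper names the measure identity as the ``measure disintegration theorem'' whereas you derive it directly from the $K$-invariance of $\mu$ and Fubini, which is the same computation unpacked.
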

\begin{proof} Both $G/H$ and $K\backslash G/H$ are compact Hausdorff second countable, hence metrizable by Urysohn's metrization theorem. The canonical quotient map $\operatorname{q}:G/H\to K\backslash G/H$ is a continuous surjection between compact metrizable spaces. By Federer-Morse theorem there exists a Borel subset $Z\subset G/H$ such that the restriction $\operatorname{q}|_Z:Z\to K\backslash G/H$ is a Borel isomorphism. Let $W\doteq Z\setminus G/H^{(K)}$ and $X=\operatorname{q}(W)$, so that $\operatorname{q}|_W:W\to X$ is a Borel isomorphism. Define $\varphi:K\times X\to G/H$ by setting
\begin{equation}
\varphi(k,Kx)\doteq k\cdot\operatorname{q}|_W^{-1}(Kx),\quad\forall k\in K,\quad\forall Kx\in X.
\end{equation}
$\varphi$ is Borel bi-measurable, since it is the composition of bi-measurable maps $(k,x)\mapsto k\cdot x$ and $\operatorname{q}|_W^{-1}$. The properties (\ref{varphiprops}) are easily implied by the definition of $\varphi$. The map $\varphi$ is also injective. Indeed, if $\varphi(k_1,Kx_1)=\varphi(k_2,Kx_2)$ then
\begin{equation}
\operatorname{q}(\varphi(k_1,Kx_1))=Kx_1=\operatorname{q}(\varphi(k_2,Kx_2))=Kx_2,
\end{equation}
and
\begin{equation}
\varphi(k_1,Kx_1)=k_1\varphi(\id,Kx_1)=\varphi(k_2,Kx_2)=k_2\varphi(\id,Kx_1),
\end{equation}
so that $k_2^{-1}k_1\varphi(\id,Kx_1)=\varphi(\id,Kx_1)$. If $k_1\neq k_2$ then $\varphi(\id,Kx_1)$ has a non-trivial stabilizer, that is, $\varphi(\id,Kx_1)\in G/H^{(K)}$. But $\varphi(\id,Kx_1)=\operatorname{q}|_W^{-1}(Kx_1)\in W$ and $W\cap G/H^{(K)}=\emptyset$, which is a contradiction. Thus $k_1=k_2$ and the injectivity is proven. Denoting $Y\doteq\varphi(X)\subset G/H$ we see that $\varphi:X\to Y$ is a Borel isomorphism.

For every $f\in C(G/H)$, by measure disintegration theorem, we have
\begin{equation}
\int_{G/H}f(x)d\mu(x)=\int_{K\backslash G/H}d\nu(Kx)\int_Kf(kx)dk.
\end{equation}
Let $\chi_X$ and $\chi_Y$ be the indicator functions of the subsets $X$ and $Y$, respectively. Since $K\cdot Y\subset Y$ we have that $\chi_Y(x)=\chi_X(Kx)$. It follows that
\begin{eqnarray}
\int_Yf(x)d\mu(x)=\int_{G/H}f(x)\chi_Y(x)d\mu(x)=\int_{K\backslash G/H}d\nu(Kx)\int_Kf(kx)\chi_Y(kx)dk=\nonumber\\
\int_{K\backslash G/H}\chi_X(Kx)d\nu(Kx)\int_Kf(kx)dk=\int_Xd\nu(Kx)\int_Kf(\varphi(k,Kx))dk,
\end{eqnarray}
which shows that $\varphi:(X,dk\otimes\nu)\to(Y,\mu)$ is a measure space isomorphism.

Finally, let us note that $K\cdot Z=G/H$. Indeed, for every $x\in G/H$ we have that $z=\operatorname{q}|_Z^{-1}(\operatorname{q}(x))\in Z$, and since $\operatorname{q}(x)=\operatorname{q}(z)$ we have that $\exists k\in K$ such that $kz=x$. On the other hand, it is easy to see that the subset $G/H^{(K)}\subset G/H$ is left $K$-invariant, for if $x\in G/H^{(K)}$ with $k_x\in K$ such that $k_xx=x$ then for every $k\in K$ it follows that $k_yy=y$, where $y=kx$ and $k_y=k_0k_xk_0^{-1}$, which means that $y\in G/H^{(K)}$. Therefore
\begin{equation}
\left|G/H\setminus Y\right|_\mu=\left|K\cdot Z\setminus K\cdot W\right|_\mu=\left|K\cdot(Z\setminus W)\right|_\mu\le\left|K\cdot G/H^{(K)}\right|_\mu=\left|G/H^{(K)}\right|_\mu=0,
\end{equation}
so that $|Y|_\mu=|X|_{dk\otimes\nu}=1$. This completes the proof. $\Box$
\end{proof}

\subsection*{Spheres}

As an instructive illustration of the above constructions we will consider spheres $\mathbb{S}^d$, $2\le d\in\mathbb{N}$, with their Euclidean (Lebesgue) probability measures (surface area normalized to one). For $d=2$ the Statements 2 and 3 of Theorem \ref{MainTheorem} were obtained in \cite{Gri90}.

The sphere $\mathbb{S}^d$ can be considered as the homogeneous space $G/H$ with $G=\mathrm{SO}(d+1)$ and $H=\mathrm{SO}(d)$. Harmonic analysis in these homogeneous spaces is a classical subject widely available in the literature (see e.g. \cite{VK93}). The dual space $\widehat{G/H}$ consists of irreducible representations by harmonic polynomials of fixed degree $\rho\in\mathbb{N}_0$, and it is conveniently ordered according to that degree, $\rho\in\widehat{G/H}\simeq\mathbb{N}_0$. The dimension of the representation $\rho$ is
\begin{equation}
d_\rho=\dim\mathcal{H}_\rho={{d+\rho}\choose{d}}-{{d+\rho-2}\choose{d}},\quad\forall\rho\in\mathbb{N}_0,
\end{equation}
whereas the multiplicities are all $d^H_\rho=1$. We choose standard spherical coordinates $x=(\theta_1,\ldots,\theta_{d-1},\phi)$, where $\theta_j\in[0,\pi]$, $j=1,\ldots,d-1$, and $\phi\in[0,2\pi)$. The orthonormal system $\{\varphi_{\rho,i,\alpha}\}$ in this case consists of spherical harmonics
\begin{equation}
\varphi_{\rho,i,\alpha}(x)=Y_\rho^i(\theta_1,\ldots,\theta_{d-1},\phi),\quad\forall\rho\in\mathbb{N}_0,\quad i=1,\ldots,d_\rho.
\end{equation}
The block Fourier series of a function $f\in L^1(\mathbb{S}^d)$ is
\begin{eqnarray}
\sum_{\rho=1}^\infty\sum_{i=1}^{d_\rho}\hat f(\rho;i)Y_\rho^i(\theta_1,\ldots,\theta_{d-1},\phi),\\
\hat f(\rho;i)=\int_0^{2\pi}\int_0^\pi\ldots\int_0^\pi f(\theta_1,\ldots,\theta_{d-1},\phi)\bar Y_\rho^i(\theta_1,\ldots,\theta_{d-1},\phi)d\mu(\theta_1,\ldots,\theta_{d-1},\phi),\\
d\mu(\theta_1,\ldots,\theta_{d-1},\phi)=\frac{\Gamma(\frac{d+1}2)}{2\pi^{\frac{d+1}2}}\sin^{d-1}(\theta_1)d\theta_1\ldots\sin(\theta_{d-1})d\theta_{d-1}d\phi.
\end{eqnarray}
A natural cylindric structure is obtained by choosing $K=\mathrm{SO}(2)$, the circle subgroup responsible for rotation in the longitudinal variable $\phi$. The subset $G/H^{(K)}$ here contains only the two poles - $\theta_j=0$, $j=1,\ldots,d-1$, and $\theta_j=\pi$, $j=1,\ldots,d-1$, respectively. Thus indeed $\left|G/H^{(K)}\right|_\mu=0$, and Proposition \ref{CylStrCompHomSpProp} applies. The section $Z\subset G/H$ appearing in the proof of Proposition \ref{CylStrCompHomSpProp} can be chosen to correspond to the meridian $\phi=0$ in $\mathbb{S^d}$, which is Borel isomorphic to $\mathrm{SO}(2)\backslash\mathrm{SO}(d+1)/\mathrm{SO}(d)$ through the quotient map $\operatorname{q}$. In this way we have the almost isomorphism
\begin{equation}
\varphi:\mathrm{SO}(2)\times\mathrm{SO}(2)\backslash\mathrm{SO}(d+1)/\mathrm{SO}(d)\to \mathbb{S}^d
\end{equation}
given by $\varphi(\phi,(\theta_1,\ldots,\theta_{d-1}))=(\theta_1,\ldots,\theta_{d-1},\phi)$, i.e., simply by separation of the variable $\phi$. As a final step we parameterize $\mathrm{SO}(2)=\mathbb{S}^1$ by $t=\phi/2\pi$ to obtain an almost isomorphism
\begin{equation}
[0,1]\times\mathrm{SO}(2)\backslash\mathrm{SO}(d+1)/\mathrm{SO}(d)\to \mathbb{S}^d.
\end{equation}
This is the cylindric structure used implicitly in \cite{Gri90}.

\section*{Construction of $E$ and $g$}

The proof of the main theorem above is constructive, although the construction of the set $E$ and of the approximating function $g$ may be hard to follow due to the complexity of the proof. In this last section we will very briefly sketch that construction step by step.

\begin{itemize}

\item Choose an arbitrary ordering $\{R_k\}_{k=1}^\infty$ of all Fourier polynomials with rational coefficients.

\item For every $k\in\mathbb{N}$, choose a partition $\{\Delta_l(k)\}_{l=1}^{\nu_0(k)}$ of the cylindric measure space $\mathcal{M}=[0,1]\times\mathcal{N}$ of the form $\Delta_l(k)=[a_l(k),b_l(k)]\times\tilde\Delta_l(k)$ such that the measures $|\Delta_l(k)|$ are small enough, as well as a subordinate real step function $\Lambda(k)=\sum_{l=1}^{\nu_0(k)}\gamma_l(k)\chi_{\Delta_l(k)}$ ($\chi_X$ is the indicator function of the subset $X$), such that $\|\Lambda(k)-R_k\|_1$ is sufficiently small.

\item For every $k\in\mathbb{N}$, choose a number $\delta_*(k)\in(0,\frac12)$ so that $\{\delta_*(k)\}_{k=1}^\infty$ decays sufficiently rapidly. Define the periodic step function
    $$
    I(t)=1-\frac1{\delta_*(k)}\chi_{[0,\delta_*(k))}(t\mod 1),
    $$
    and the measurable function $\hat g_l^k\in L^\infty(\mathcal{M})$ by
    $$
    \hat g_l^k(x)=\gamma_l(k)I(s_0(k)t)\chi_{\Delta_l(k)}(x),\quad\forall x\in\mathcal{M},
    $$
    where the positive number $s_0(k)$ is sufficiently large. Define the measurable subsets $\hat E_l(k)\subset\Delta_l(k)$ by $$
    \hat E_l(k)=\left\{x\in\Delta_l(k)\,\vline\quad\hat g_l^k(x)=\gamma_l(k)\right\}.
    $$
    Define inductively the natural numbers $\hat N_l(k)$, $l=0,\ldots,\nu_0(k)$ and Fourier polynomials $\hat Q_l^k$, $l=1,\ldots,\nu_0(k)$, by setting $\hat N_0(1)=1$, $\hat N_0(k)=\hat N_{\nu_0(k)}(k-1)$ for $k>1$, and $\hat Q_l^k=\sum_{n=\hat N_{l-1}(k)}^{\hat N_l(k)-1}Y_n(\hat g_l^k)$ ( here $\{Y_n(g)\}_{n=1}^\infty$ is the Fourier series of the function $g\in L^2(\mathcal{M})$), so that the quantities
    $$
    \left\|\sum_{n=1}^{\hat N_l(k)-1}Y_n(\hat g_l^k)-\hat g_l^k\right\|_2
    $$
    are sufficiently small.

\item For every $k\in\mathbb{N}$, define the natural numbers $N_k=\hat N_{\nu_0(k)}-1$, measurable subsets $\tilde E_k=\bigcup_{l=1}^{\nu_0(k)}\hat E_l(k)$, and measurable functions $\tilde g_k\in L^\infty(\mathcal{M})$ by
    $$
    \tilde g_k=R_k-\Lambda(k)+\sum_{l=1}^{\nu_0(k)}\hat g_l^k,
    $$
    as well as Fourier polynomials
    $$
    \tilde Q_k=\sum_{l=1}^{\nu_0(k)}\hat Q_l^k=\sum_{n=N_{k-1}}^{N_k-1}\tilde Y_n.
    $$

\item Set
$$
E=\bigcap_{k=1}^\infty\tilde E_k.
$$

\item Choose by Lemma \ref{PolynomSeriesLemma} a subsequence $\{R_{k_s}\}_{s=0}^\infty$ such that $\|R_{k_s}\|_1$ decay sufficiently rapidly, and $\sum_{s=0}^\infty R_{k_s}=f$ in $L^1(\mathcal{M})$.

\item Define inductively the sequence of natural numbers $\{\nu_s\}_{s=1}^\infty$, $\nu_s>\nu_{s-1}$ for $s>1$, and measurable functions $g_s\in L^\infty(\mathcal{M})$ by choosing $\nu_1$ so that $N_{\nu_1-1}>\max\sigma(R_{k_0})$ and
    $$
    \left\|R_{\nu_s}-R_{k_s}+\sum_{j=1}^{s-1}\left[\tilde Q_{\nu_j}-g_j\right]\right\|_1
    $$
    is sufficiently small, and setting $g_s=R_{ks}+\tilde g_{\nu_s}-R_{\nu_s}$.

\item Finally, set
$$
g=R_{k_0}+\sum_{s=1}^\infty g_s.
$$

\end{itemize}

\section*{Acknowledgements}

The first named author thanks Prof. G. Folland for enlightening comments on topological double coset spaces.

This work, for the second named author, was supported by the RA MES
State Committee of Science, in the frames of the research project \textnumero\, 18T-1A148.
The third author was supported in parts by the FWO Odysseus Project, EPSRC grant EP/R003025/1 and by the Leverhulme
Grant RPG-2017-151.

\end{document}